\newtheorem{tm}{Theorem}[section]
\newtheorem{defin}{Definition}[section]
\newtheorem{prop}{Proposition}[section]
\newtheorem{lem}{Lemma}[section]
\newtheorem{coro}{Corollary}[section]
\newtheorem{cond}{Condition}[section]
\numberwithin{equation}{section}
\newcommand{\be}{\begin{eqnarray}}
\newcommand{\ee}{\end{eqnarray}}
\newcommand{\ce}{\begin{eqnarray*}}
\newcommand{\de}{\end{eqnarray*}}
\newcommand{\rd}{{\mathbb R^d}}
\def\R{{\mathbb R}}
\def\a{\alpha}
\def\de{\delta}
\def\E{{\mathbb E}}
\begin{document}

\title{Space-time fractional stochastic partial differential equations with L\'evy Noise}

\author{Xiangqian Meng}
\address{Department of Mathematics, University of Washington at Seattle, Seattle, WA,98105, USA}
\email{xqmeng@uw.edu}
\author{ Erkan Nane}
\address{Department of Mathematics and Statistics, Auburn University, Auburn, AL 36849, USA}
\email{nane@auburn.edu}

\date{}


\maketitle


\begin{abstract}
We consider   non-linear time-fractional  stochastic heat type equation
$$\frac{\partial^\beta u}{\partial t^\beta}+\nu(-\Delta)^{\alpha/2} u=I^{1-\beta}_t \bigg[\int_{\mathbb{R}^d}\sigma(u(t,x),h) \stackrel{\cdot}{\tilde N }(t,x,h)\bigg]$$ and
$$\frac{\partial^\beta u}{\partial t^\beta}+\nu(-\Delta)^{\alpha/2} u=I^{1-\beta}_t \bigg[\int_{\mathbb{R}^d}\sigma(u(t,x),h) \stackrel{\cdot}{N }(t,x,h)\bigg]$$
 in $(d+1)$ dimensions, where  $\alpha\in (0,2]$ and $d<\min\{2,\beta^{-1}\}\a$, $\nu>0$, $\partial^\beta_t$ is the Caputo fractional derivative, $-(-\Delta)^{\alpha/2} $ is the generator of an isotropic stable process, $I^{1-\beta}_t$ is the fractional integral operator, ${N}(t,x)$ are Poisson random measure with  $\tilde{N}(t,x)$  being the compensated Poisson random measure. 
 $\sigma:{\mathbb{R}}\to{\mathbb{R}}$ is  a Lipschitz continuous function.
We prove existence and uniqueness of mild solutions to this equation. Our results  extend the   results in the case of parabolic stochastic partial differential equations obtained in \cite{foondun-khoshnevisan-09, walsh}.  Under the linear growth of $\sigma$, we show that the solution of the time fractional stochastic partial differential equation follows an exponential growth with respect to the time. We also show the nonexistence of the random field solution of both stochastic partial differential equations when $\sigma$ grows faster than linear.
\end{abstract}

{\bf Keywords:} {Time-fractional stochastic partial differential equations; fractional Duhamel's principle; Caputo derivatives; Walsh isometry}

\maketitle

\section{Introduction}

\color{black}

Fractional calculus has received plenty of attention because its wide application in the field of physics, chemistry, finance and etc. In \cite{anomalous diffusion}, we notice that many natural phenomena do not fit into the relatively simple description of diffusion developed by Einstein a century ago, such as the forage of food for animals in the forest, the transport of electrons in amorphous semiconductors in an electric field, the travel times of contaminants in groundwater and the proteins diffuse across cell membranes. Some of these phenomena follow  models like  L\'evy flight, a fractal random walk, or  composed of self-similar jumps. Mathematicians have been aware of fractional derivatives for over 300 years, but, like the Pareto distribution that has no mean value, these derivatives only find their ways into the physical sciences due to the relatively recent observations of anomalous diffusion: see, for example, \cite{beyond Brownian motion, randomwalkguide}.

Stochastic partial differential equations (SPDE) have been studied  in mathematics and various sciences as well; see, for example, Khoshnevisan \cite{khoshnevisan-cbms} for a long list of references. The area of SPDEs is interesting to mathematicians because it contains a lot of hard open problems. However, not much have been done for equations driven by discontinuous noise even though this situation has started to change recently, see, for example, \cite{balan} and references therein.

In this paper we  consider the following two time fractional stochastic partial differential equations (TSPDE),

 \begin{equation}\label{equcompen}
\frac{\partial^\beta u}{\partial t^\beta}+\nu(-\Delta)^{\alpha/2} u=I^{1-\beta}_t \bigg[\int_\rd\sigma(u(t,x),h) \stackrel{\cdot}{\tilde N }(t,x,h)\bigg]
\end{equation}
and
\begin{equation}\label{equnoncompen}
\frac{\partial^\beta u}{\partial t^\beta}+\nu(-\Delta)^{\alpha/2} u=I^{1-\beta}_t \bigg[\int_\rd\sigma(u(t,x),h) \stackrel{\cdot}{N }(t,x,h)\bigg]
\end{equation}
where the initial condition $u_0$ is measurable and bounded,
 $-(-\Delta)^{\alpha/2} $ is the fractional Laplacian with $\alpha\in (0,2]$,  $\stackrel{\cdot}{\tilde{N}}(t,x)$ is a compensated Poisson noise with $x\in \R^d$, and $\dot{N}(t,x)$ is a compensated Poisson noise.  Caputo fractional derivative $\partial ^{\beta}_t $ of order $\beta \in (0,1)$,
  is defined by $$\partial^\beta _t f(t)=\frac{1}{\Gamma (1-\beta)} \frac{d}{dt}\int^t_0 (t-s)^{-\beta}(f(s)-f(0))ds,$$ where $\Gamma(\beta ):=\int^\infty_0 t^{\lambda-1}e^{-t}dt$  is the Gamma function.

The meaning of the above fractional derivative of $f$ at time $t$ depends on the whole history of $f(s)$ on $(0,t)$ with the nearest past affecting the present more. (See \cite{zhenqing}). The fractional diffusion equation $\partial^\beta_t u=\Delta u$ with $0<\beta<1$ has been widely used to model the anomalous diffusion exhibiting subdiffusive behavior due to the particle sticking and trapping phenomena (see e.g.\cite{calculus}).  We will prove the existence  and uniqueness of the mild solution of equation \eqref{equcompen} and \eqref{equnoncompen} under the Lipschitz condition for $\sigma$. We will also discuss the existence of the finite energy solution  and the blow-up and non-existence of the solution for both equations under some  specific conditions. This paper is  an extension of the results  in the  papers \cite{globalOmaba} and \cite{nane-mijena-2014}. Here we consider the fractional time derivative and Poisson type noise. Also, L\'evy noise ${\tilde N }(t,x,h)$ or $N(t,x,h)$ has better modeling characteristics  than white noise in financial engineering\cite{Carr2002}, \cite{Carr2003}, signal detection \cite{signal}, and other areas.  It 
can capture some large moves and unpredictable events.

Let $G_t(x)$ be the fundamental solution of  the fractional heat type equation
\begin{equation}\label{Eq:Green0}
\partial^\beta_tG_t(x)=-\nu(-\Delta)^{\alpha/2}G_t(x).
\end{equation}
$G_t(x)$ is the transition density function of $X(E_t)$, where $X$ is an isotropic $\a$-stable L\'evy process in $\R^d$ and $E_t$ is the first passage time of a $\beta$-stable subordinator $D=\{D_r,\,r\ge0\}$, or the inverse stable subordinator of index $\beta$: see, for example, Bertoin \cite{bertoin} for properties of these processes, Baeumer and Meerschaert \cite{fracCauchy} for more on time fractional diffusion equations, and Meerschaert and Scheffler  \cite{limitCTRW} for properties of the inverse stable subordinator $E_t$.

Let $p_{{X(s)}}(x)$ and $f_{E_t}(s)$ be the density of $X(s)$ and $E_t$, respectively. Then the Fourier transform of $p_{{X(s)}}(x)$ is given by
\begin{equation}\label{Eq:F_pX}
\widehat{p_{X(s)}}(\xi)=e^{-s\nu|\xi|^\a},
\end{equation}
and
\begin{equation}\label{Etdens0}
f_{E_t}(x)=t\beta^{-1}x^{-1-1/\beta}g_\beta(tx^{-1/\beta}),
\end{equation}
where $g_\beta(\cdot)$ is the density function of $D_1.$ The function $g_\beta(u)$ (cf. Meerschaert and Straka \cite{meerschaert-straka}) is infinitely differentiable on the entire real line, with $g_\beta(u)=0$ for $u\le 0$.

By conditioning, we have
\begin{equation}\label{Eq:Green1}
G_t(x)=\int_{0}^\infty p_{_{X(s)}}(x) f_{E_t}(s)ds.
\end{equation}
We define a Poisson random measure (\textit{or non-compensated Poisson random measure}), $N:=\Sigma_{i\geq1}\delta_{(T_i,X_i,Z_i)}$ on $\R_{+}\times{\R^d} \times{\R^d} $ defined on a probability space$ (\Omega,\mathcal{F},P) $ with intensity measure
$\mathrm{d}t \mathrm{d}x \mu(\mathrm{d}h)$. Throughout  this paper we assume that $\mu$ is a L\'evy measure on $\R^d$, which satisfies the following
$$
\int_{\R^d}(1\wedge {|h|^2}) \mu(\mathrm{d}h)<\infty.
$$
Then we set $\tilde{N}(\mathrm{d}s\mathrm{d}x\mathrm{d}h)=N(\mathrm{d}s\mathrm{d}x\mathrm{d}h)-\mathrm{d}s\mathrm{d}x\mu(\mathrm{d}h)$ and  call $\tilde{N}$ the \textit{compensated Poisson Random measure}.
In this paper we study the existence and uniqueness of the solution to \eqref{equcompen} under global Lipchitz conditions on $\sigma$, using the white noise  approach of \cite{walsh}.

We say that a random field $\{u(t,x)\}_{x \in \R^d, t>0}$ is a mild solution of equation \eqref{equcompen} if a.s., the following is satisfied
$$u(t,x)=\int_{\R^d}G(t,x-y){u_0}(y)\mathrm{d}y+\int_0^t \int_{\R^d}\int_{\R^d} G(t-s,x-y)\sigma(u(s,y),h)\tilde{N}(\mathrm{d}s\mathrm{d}x\mathrm{d}h).$$

For more explanation of mild solutions about Cauchy problem, please refer to \cite{Wolfgang}. We also refer to Mijena and Nane \cite{nane-mijena-2014} for the use of time fractional Duhamel's principle in obtaining the mild solutions.

For the existence and uniqueness of solutions to \eqref{equcompen} we  need the following condition on $\sigma$.\\
\begin{cond}\label{cond-compen}
 There exists a non-negative function $J$ and a finite positive constant $Lip_\sigma$, such that for all $x,y,h \in \R^d$, we have
$$\sigma(0,h)=0\hbox{  and } |\sigma(x,h)-\sigma(y,h)| \leq J(h)Lip_\sigma |x-y|.$$The function $J$ is assumed to satisfy the following integrability condition,
 $\int_{\R^d} J(h)^2\mu(dh)\leq K$,
 where $K$ is some positive finite constant.
\end{cond}
For the existence and uniqueness of solutions to \eqref{equnoncompen} we need the following condition on $\sigma$.
\begin{cond}\label{condofnoncompen}
 There exists a non-negative function $J$ and a finite positive constant $Lip_\sigma$, such that for all $x,y,h \in {\R^d}$, we have
$$\sigma(0,h)=0 \hbox{ and }|\sigma(x,h)-\sigma(y,h)| \leq J(h)Lip_\sigma |x-y|.$$
 The function $J$ is assumed to satisfy the following integrability condition,
 $\int_{\R^d} J(h)\mu(dh)\leq K$,
 where $K$ is some positive finite constant.
\end{cond}

{ The fractional integral of the noise term  in equations \eqref{equcompen} and \eqref{equnoncompen} are  not merely used to get a simple integral solution. A physical important reason to take the fractional integral of the noise in  these equations:
Apply the fractional derivative of order $1-\beta$ to both sides of these equations to see the forcing function, in the traditional units $x/t$: see, for example, Meerschaert et al  \cite{Meerschaert-2016}. In this paper the authors work on a deterministic time fractional equation with an external force, but the same physical principle should apply for the stochastic equations too.
}

We now briefly give an outline of this paper. We adapt the methods of  proofs of the results in  \cite{nane-mijena-2014} with  many crucial nontrivial changes. We state the main results of the paper in Section \ref{sec:2}. We give some preliminary results in Section \ref{sec:3}. Moment estimates for time increments and spatial increments of the solution are given in  \ref{sec:4}.  The main result in this section is Proposition \ref{stoyoungcompen} and Proposition \ref{stochyoungnoncompen} under Lipschitz conditions of $\sigma$.
In Sections \ref{sec:5} and  \ref{sec:6}, we prove the main results of the paper  under some  conditions of $\sigma$. We also give the behavior of the growth of the moments of the solutions when $\sigma$ is growing linearly. In addition, we  also show that under faster than linear growth of $\sigma$, there is  no finite energy solution for equation \eqref{equcompen} with the compensated Poisson noise, and no random field solution for both equations.
\section{Statement of main results}\label{sec:2}

Our first existence and uniqueness result is the following theorem.
\begin{tm}\label{Existcompen}
 Let $d < \min\{2,\gamma^{-1}\}\alpha$. If $u_0$ is measurable and bounded,  then there exists a unique random field solution  to equation \eqref{equcompen} under Condition \ref{cond-compen}.
\end{tm}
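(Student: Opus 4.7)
The plan is to set up a standard Picard iteration in an $L^2$ framework. Define $u^{(0)}(t,x) := (G_t \star u_0)(x)$ and, inductively,
\begin{equation*}
u^{(n+1)}(t,x) := (G_t \star u_0)(x) + \int_0^t \!\!\int_{\R^d}\!\!\int_{\R^d} G(t-s,x-y)\,\sigma(u^{(n)}(s,y),h)\,\tilde N(ds\,dy\,dh),
\end{equation*}
and work with the norm $\|v\|_T^2 := \sup_{0\le t\le T,\,x\in\R^d} \E|v(t,x)|^2$. Because $u_0$ is bounded and $G_t$ is a probability density, the deterministic term $(G_t\star u_0)(x)$ is uniformly bounded, so the initial iterate is finite in $\|\cdot\|_T$.

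The workhorse is the Walsh isometry for the compensated Poisson measure, which gives
\begin{equation*}
\E\Big|\int_0^t\!\!\int_{\R^d}\!\!\int_{\R^d} G(t-s,x-y)\sigma(v(s,y),h)\tilde N(ds\,dy\,dh)\Big|^2 \!= \!\int_0^t\!\!\int_{\R^d}\!\!\int_{\R^d}\! G(t-s,x-y)^2\, \E|\sigma(v(s,y),h)|^2\,\mu(dh)\,dy\,ds.
\end{equation*}
Using Condition \ref{cond-compen}, i.e.\ $\sigma(0,h)=0$ and $|\sigma(v,h)|\le J(h) Lip_\sigma |v|$ with $\int J^2 d\mu \le K$, the $\mu$-integral contributes the constant $K Lip_\sigma^2$, and the remaining double integral is bounded by
\begin{equation*}
K Lip_\sigma^2 \int_0^t \|v\|_s^2\,\Big(\int_{\R^d} G(t-s,y)^2\,dy\Big)\,ds.
\end{equation*}
The key analytic input is that $h(r) := \int_{\R^d} G(r,y)^2\,dy$ is integrable on $(0,T)$ precisely under the hypothesis $d<\min\{2,\beta^{-1}\}\alpha$: via Plancherel and the subordination formula \eqref{Eq:Green1}, one shows $h(r)\lesssim r^{-\beta d/\alpha}$, which is integrable near $0$ iff $\beta d/\alpha<1$, while finiteness of $h(r)$ pointwise requires $d<2\alpha$ (so that $\widehat{p_{X(s)}}\in L^2$). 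This is presumably the content of Proposition \ref{stoyoungcompen} referenced in the outline, and is the main technical obstacle.

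Granted the $h\in L^1(0,T)$ estimate, the contraction step is routine: one obtains
\begin{equation*}
\|u^{(n+1)}-u^{(n)}\|_T^2 \le C\int_0^T \|u^{(n)}-u^{(n-1)}\|_s^2\, h(T-s)\,ds,
\end{equation*}
and a standard Picard-type argument (iterate and use that $h\in L^1$ to produce a convergent series of convolution powers, or apply a Gronwall lemma for singular kernels) shows $\{u^{(n)}\}$ is Cauchy in $\|\cdot\|_T$ uniformly on compacts in $(t,x)$. The limit $u$ satisfies the mild formulation by passing to the limit in both sides, using the Lipschitz bound on $\sigma$ to interchange limit and stochastic integral. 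Uniqueness follows the same way: if $u$ and $v$ are two mild solutions with finite $\|\cdot\|_T$, then $f(t):=\sup_{x}\E|u(t,x)-v(t,x)|^2$ satisfies $f(t) \le C\int_0^t f(s) h(t-s)\,ds$, and a singular-kernel Gronwall argument forces $f\equiv 0$ on $[0,T]$; since $T$ was arbitrary, $u=v$. The verification that $\|u^{(0)}\|_T<\infty$, together with the contraction estimate, also propagates $\|u^{(n)}\|_T<\infty$ uniformly in $n$, ensuring the limit itself lies in the working space.
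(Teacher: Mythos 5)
Your proposal is correct and follows essentially the same route as the paper: Picard iteration for the mild formulation, the Walsh isometry for $\tilde N$ combined with Condition \ref{cond-compen}, and the key scaling $\int_{\R^d}G_t^2(x)\,dx=C^*t^{-\beta d/\alpha}$ (Lemma \ref{Lem:Green1}), which is exactly where $d<\min\{2,\beta^{-1}\}\alpha$ enters. The only difference is cosmetic: you close the argument with a finite-horizon sup norm and a singular-kernel Gronwall/convolution-powers estimate, whereas the paper uses the exponentially weighted norm $\|\cdot\|_{2,\gamma}$ and chooses $\gamma$ large so that $\mathcal{A}$ is a strict contraction on $\mathcal{L}^{\gamma,2}$ (Proposition \ref{stoyoungcompen}), which also yields the exponential moment bound directly.
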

Next, we show that a result of the growth of the second moment of  the solution to equation \eqref{equcompen} under condition of the linear growth of $\sigma$.
\begin{cond}\label{condof-beyondlinear}
There exists a positive function $\overline {J}$ and a constant L such that for all $x,h \in \R^d$ , we have
$$|\sigma(x,h)|\geq L \overline{J}(h)|x|.$$
The function $\overline{J}$ is assumed to satisfy the following integrability condition,
$$\int_{\R^d} \overline{J}(h)^2 \mu(\mathrm{d}h)\geq \kappa,$$
where $\kappa$ is some positive constant.
\end{cond}

The next result proves the intermittency property  of the solution of equation \eqref{equcompen}.

\begin{tm}\label{intermittency-compen}
Let $d < \min\{2,\gamma^{-1}\}\alpha$. Suppose that Condition \ref{cond-compen} holds and  $u_0$ is bounded above, which means there is a positive number $\eta_1$ such that $\eta_1:=\sup_{x\in \R^d} u_0(x)$, then the solution $u$ of equation  \eqref{equcompen} satisfies
$$
  \sup_{x \in \R^d}\E|u(t,x)|^2\leq c_1e^{c_2 t} \hbox{ for all } t>0.
  $$
{ where $c_1:= \eta_1^2$ and $c_2$ depends on $\alpha,\beta,d,\kappa$ and $Lip_\sigma$ in Condition \ref{cond-compen}.}\\
Similarly, if Condition  \ref{condof-beyondlinear} holds and $u_0$ is bounded below,  which means there is a positive number $\eta_2$ such that $\eta_2:=\inf_{x\in \R^d}u_0(x)$, then the solution $u$ of  equation \eqref{equcompen} satisfies
 $$
  \inf_{x \in \R^d}\E|u(t,x)|^2\geq c_1e^{c_2 t} \hbox{ for all } t>0.$$
  where $c_1:= \eta_2^2$ and $c_2$ depends on $\alpha,\beta,d,\kappa$ and $L$ in Condition \ref{condof-beyondlinear}.
\end{tm}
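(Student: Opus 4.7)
My plan is to derive both bounds from a single renewal-type identity for $\E|u(t,x)|^2$. Squaring the mild formulation and applying the Walsh-type $L^2$ isometry for the compensated Poisson measure (already used in the proof of Proposition \ref{stoyoungcompen}) gives
\[
\E|u(t,x)|^2 = \Big(\int_{\R^d}G(t,x-y)u_0(y)\,dy\Big)^2 + \int_0^t\!\int_{\R^d}\!\int_{\R^d} G(t-s,x-y)^2\,\E|\sigma(u(s,y),h)|^2\,\mu(dh)\,dy\,ds.
\]
Setting $M(t):=\sup_{x\in\R^d}\E|u(t,x)|^2$ and $m(t):=\inf_{x\in\R^d}\E|u(t,x)|^2$, both bounds will follow from Volterra (in)equalities on these two functions after estimating the spatial $L^2$-norm of the kernel $G$.

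For the upper bound, since $G(t,\cdot)$ is a probability density, the deterministic term is bounded by $\eta_1^2$, while Condition \ref{cond-compen} together with $\sigma(0,h)=0$ yields the pointwise estimate $|\sigma(u(s,y),h)|^2\le Lip_\sigma^2 J(h)^2|u(s,y)|^2$ whose $\mu$-integral is controlled by $K$. Taking the supremum in $x$ and using translation invariance of $G$ produces
\[
M(t)\le \eta_1^2 + Lip_\sigma^2\, K\int_0^t \Big(\int_{\R^d}G(t-s,y)^2\,dy\Big) M(s)\,ds.
\]
The spatial integral is computed on the Fourier side using $\widehat G_t(\xi)=E_\beta(-\nu t^\beta|\xi|^\alpha)$ together with the rescaling $\xi\mapsto t^{-\beta/\alpha}\xi$, giving $\int G(t,y)^2\,dy = C_0\, t^{-\beta d/\alpha}$ with $C_0<\infty$ precisely thanks to $d<\min\{2,\beta^{-1}\}\alpha$. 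Since $\beta d/\alpha<1$, Henry's singular-kernel Gronwall inequality then bounds $M(t)$ by a Mittag--Leffler function of $t$, which in turn is majorized by $c_1 e^{c_2 t}$ with $c_1=\eta_1^2$ and $c_2=c_2(\alpha,\beta,d,K,Lip_\sigma)$.

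For the lower bound, nonnegativity of $G$ and $\int G(t,y)\,dy=1$ give $\int G(t,x-y)u_0(y)\,dy\ge\eta_2$, while Condition \ref{condof-beyondlinear} supplies the reverse bound $\E|\sigma(u(s,y),h)|^2\ge L^2\overline J(h)^2\,\E|u(s,y)|^2$. Since the isometry is an \emph{equality}, integrating $\overline J(h)^2\mu(dh)\ge\kappa$ and taking the infimum in $x$ gives
\[
m(t)\ge \eta_2^2 + L^2\kappa\, C_0\int_0^t (t-s)^{-\beta d/\alpha}\,m(s)\,ds.
\]
Iterating this inequality is legitimate because every term in the Picard-type series is nonnegative, and the resulting series is bounded below by a Mittag--Leffler function $\eta_2^2\,E_\gamma(L^2\kappa C_0\, t^\gamma)$ with $\gamma=1-\beta d/\alpha$; the two-sided exponential asymptotics of $E_\gamma$ on the positive real axis then yield the desired $c_1 e^{c_2 t}$ lower bound with $c_1=\eta_2^2$.

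I expect the main technical obstacle to be the sharp scaling identity $\int G(t,y)^2\,dy = C_0\, t^{-\beta d/\alpha}$ with $C_0<\infty$: establishing it rigorously requires Fourier analysis of the Mittag--Leffler kernel $E_\beta(-\nu t^\beta|\xi|^\alpha)$, and the dimension restriction $d<\min\{2,\beta^{-1}\}\alpha$ is precisely what is needed to secure both the $L^2$-integrability of $\widehat G_1$ in $\xi$ and the local integrability of the resulting singular kernel in $s$. Once this estimate and the weakly singular Gronwall inequality are in place, the upper and lower bounds follow in parallel from the same isometry identity, with the lower bound requiring only the additional (routine) step of justifying termwise iteration for a nonnegative series.
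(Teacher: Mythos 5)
Your proposal is correct and follows essentially the same route as the paper: the Walsh isometry \eqref{isometry} turns the mild formulation into a renewal identity, the deterministic term is bounded by $\eta_1^2$ (resp.\ $\eta_2^2$), and the Volterra inequalities for $\sup_x\E|u(t,x)|^2$ and $\inf_x\E|u(t,x)|^2$ with kernel $(t-s)^{-\beta d/\alpha}$ are closed by a weakly singular Gronwall argument. The only difference is bookkeeping: the paper simply cites Lemma \ref{Lem:Green1} for $\int_{\R^d}G_t^2(x)\,dx=C^*t^{-\beta d/\alpha}$ and Propositions \ref{expestimateupper}--\ref{expestimate} for the exponential upper and lower bounds, whereas you propose to re-derive the kernel estimate on the Fourier side and pass through Mittag--Leffler asymptotics, which yields the same conclusion.
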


{If $\{X_t\}_{t\geq 0}$ is a L\'evy process  with characteristic function  $\E\exp(i\xi X_t)=\exp(-t\psi(\xi))$, for all $t>0$,  $\xi\in \R^d$ where $\psi(\xi)$ is the characteristic exponent. When  $\{X_t\}_{t\geq 0}$ is a symmetric $\alpha$-stable process,  the characteristic exponent is $\psi(\xi)=\nu|\xi|^\alpha$, corresponding to the fractional Laplacian generator $-\nu(-\Delta)^{\alpha/2}$.
Define $\Upsilon(\gamma):=\frac{1}{2\pi}\int_{\R^d}\frac{d\xi}{\gamma+2Re\psi(\xi)} \hbox{ for all } \gamma \in (0,\infty).$

\begin{tm}\label{lowerbound}
Suppose that the assumption of  Theorem \ref{Existcompen} are in force, $\beta=1$, and that Condition \ref{condof-beyondlinear} holds. If the initial function is bounded below, then
$$
\overline{\gamma} (2)\geq \Upsilon^{-1}(\frac{1}{\kappa L^2})>0,
$$
where $\Upsilon^{-1}(t):=\sup\{\lambda>0:\Upsilon(\lambda)>t\} $.

\end{tm}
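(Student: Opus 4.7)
The plan is to derive a renewal-type lower bound for $F(t):=\inf_{x\in\R^d}\E|u(t,x)|^2$ and then extract the exponential growth rate via a Laplace-transform / Tauberian argument, exactly in the spirit of the Foondun--Khoshnevisan machinery. Since $\beta=1$, the inverse subordinator is trivial ($E_t=t$), so by \eqref{Eq:Green1} the Green kernel $G(t,\cdot)$ reduces to the transition density $p_{X(t)}(\cdot)$ of the isotropic stable process, whose Fourier transform is $e^{-t\psi(\xi)}$ with $\psi(\xi)=\nu|\xi|^\alpha$.

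First I would square the mild formulation, take expectation, and apply the It\^o-type isometry for the compensated Poisson noise to obtain
\[
\E|u(t,x)|^2 \;=\; \Bigl(\int_{\R^d} G(t,x-y)u_0(y)\,dy\Bigr)^{2} + \int_0^t\!\!\int_{\R^d}\!\!\int_{\R^d} G(t-s,x-y)^{2}\,\E|\sigma(u(s,y),h)|^{2}\,\mu(dh)\,dy\,ds.
\]
Using Condition \ref{condof-beyondlinear}, $\int_{\R^d}|\sigma(u,h)|^{2}\mu(dh)\ge \kappa L^{2}|u|^{2}$, and the hypothesis $u_0\ge \eta_2$ together with $\int G(t,\cdot)=1$, this yields
\[
\E|u(t,x)|^2 \;\ge\; \eta_2^{\,2} \;+\; \kappa L^{2}\int_0^t\!\!\int_{\R^d} G(t-s,x-y)^{2}\,\E|u(s,y)|^{2}\,dy\,ds.
\]
Bounding $\E|u(s,y)|^{2}\ge F(s)$ and using translation invariance of Lebesgue measure in $y$ gives the key scalar renewal inequality
\[
F(t)\;\ge\; \eta_2^{\,2} \;+\; \kappa L^{2}\int_0^t F(s)\,K(t-s)\,ds,\qquad K(r):=\int_{\R^d}G(r,y)^{2}\,dy.
\]

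Next I would take the Laplace transform. By Plancherel, $K(r)=(2\pi)^{-d}\int_{\R^d}e^{-2r\,\Re\psi(\xi)}\,d\xi$, and therefore
\[
\widetilde K(\lambda):=\int_0^\infty e^{-\lambda t}K(t)\,dt \;=\; \frac{1}{(2\pi)^{d}}\int_{\R^d}\frac{d\xi}{\lambda+2\,\Re\psi(\xi)} \;=\; \Upsilon(\lambda),
\]
consistent with the paper's definition. Write $\widetilde F(\lambda):=\int_0^\infty e^{-\lambda t}F(t)\,dt$. The upper bound half of Theorem \ref{intermittency-compen} ensures $F(t)\le c_1e^{c_2 t}$, so $\widetilde F(\lambda)<\infty$ for all $\lambda>c_2$, legitimizing the transform. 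Applying it to the renewal inequality gives, for such $\lambda$,
\[
\widetilde F(\lambda)\;\ge\; \frac{\eta_2^{\,2}}{\lambda}\;+\;\kappa L^{2}\,\Upsilon(\lambda)\,\widetilde F(\lambda),\qquad\text{i.e.}\qquad \bigl(1-\kappa L^{2}\Upsilon(\lambda)\bigr)\,\widetilde F(\lambda)\;\ge\;\frac{\eta_2^{\,2}}{\lambda}.
\]

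Finally, since $\Upsilon$ is continuous and strictly decreasing on $(0,\infty)$, the set where $\Upsilon(\lambda)>1/(\kappa L^{2})$ is exactly $\{\lambda<\Upsilon^{-1}(1/(\kappa L^{2}))\}$. For any such $\lambda$ the coefficient $1-\kappa L^{2}\Upsilon(\lambda)$ is negative, while the right-hand side is strictly positive, forcing $\widetilde F(\lambda)=+\infty$. This incompatibility with any exponential bound $F(t)\lesssim e^{\lambda t}$ shows that $\liminf_{t\to\infty}t^{-1}\log F(t)\ge \Upsilon^{-1}(1/(\kappa L^{2}))$, which is precisely the claim $\overline{\gamma}(2)\ge \Upsilon^{-1}(1/(\kappa L^{2}))>0$. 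The main obstacle I anticipate is a technical one: verifying rigorously that the infimum can be pulled inside the double integral to obtain a clean convolution-type renewal inequality, and making sure that $F(t)$ is measurable and Laplace-transformable (the latter handled by the already-established upper bound). Everything else is a fairly standard transform-inversion argument.
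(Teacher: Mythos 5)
Your proposal is correct and follows essentially the same route as the paper: both start from the It\^o/Walsh isometry plus the lower bound $\int|\sigma(u,h)|^2\mu(dh)\ge\kappa L^2|u|^2$, pass to the Laplace transform in $t$, identify $\int_0^\infty e^{-\lambda t}\|G_t\|_{L^2}^2\,dt=\Upsilon(\lambda)$, and conclude that the transform diverges once $\kappa L^2\Upsilon(\lambda)\ge1$. The only cosmetic difference is that you collapse to a scalar renewal inequality for $\inf_x\E|u(t,x)|^2$ and rearrange once, whereas the paper keeps the spatial convolution $F_\lambda\ge G_\lambda+\kappa L^2 F_\lambda*H_\lambda$ and iterates it into the geometric series $\eta^2\lambda^{-1}\sum_n(\kappa L^2\Upsilon(\lambda))^n$ \`a la Foondun--Khoshnevisan; the two mechanisms give the identical divergence criterion (and note that the divergence of $\widetilde F$ yields a $\limsup$, not a $\liminf$, lower bound on $t^{-1}\log F(t)$, which is all that $\overline{\gamma}(2)$ requires).
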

}
This theorem is an extension of the corresponding result in \cite{foondun-khoshnevisan-09} to SPDEs with L\'evy noise.
\color{black}
We will establish the non-existence of finite energy solutions when $\sigma$ grows faster than linear.\\
A random field $u$ is a \textit{finite energy solution} to the fractional stochastic heat equation \eqref{equcompen} when $u \in \cup_{\gamma>0} \mathcal{L}^{\gamma,2} $  and there exists $\rho_{*}>0$ such that
$$\int_0^\infty e^{-\rho_{*}t}\E(|u_t(x)|^2)\mathrm{d}t<\infty \text{ for all } x \in \R^d.
$$
\begin{cond}\label{condoflip}
There exist  constants  $L ,\rho$ and a positive function $\overline{J}$  such that $\rho>1$ and for all $x,h\in {\R^d}$, we have
$$|\sigma(x,h)|\geq L \overline{J} (h)|x|^\rho,$$
where the function $\overline{J}$ is the same as in Condition \ref{condof-beyondlinear}.
\end{cond}

\begin{tm}\label{nonexistcompen}
Let let $d < \min\{2,\gamma^{-1}\}\alpha$. If the initial condition $u_0(x)$ is bounded below, then there is no finite energy solution to equation \eqref{equcompen} under Condition \ref{condoflip}.
\end{tm}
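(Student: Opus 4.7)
The plan is to argue by contradiction: assuming a random-field solution $u$ of \eqref{equcompen} exists with finite second moment, Condition \ref{condoflip} will force $\E|u(t,x)|^2$ to blow up in finite time, which is incompatible with any finite-energy growth and in particular with $u\in\cup_{\gamma>0}\mathcal{L}^{\gamma,2}$. The mechanism is a Fujita-type blow-up for the nonlinear Volterra inequality satisfied by $m(t):=\inf_{x\in\R^d}\E|u(t,x)|^2$.

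First, the $L^2$-isometry for the compensated Poisson stochastic integral (cf.\ Proposition \ref{stoyoungcompen}) applied to the mild formulation gives
\[\E|u(t,x)|^2=(G_t\ast u_0)(x)^2+\int_0^t\!\int_{\R^d}\!\int_{\R^d}G_{t-s}(x-y)^2\,\E|\sigma(u(s,y),h)|^2\,dy\,\mu(dh)\,ds.\]
Condition \ref{condoflip}, together with $\int_{\R^d}\overline J(h)^2\mu(dh)\geq\kappa$ inherited from Condition \ref{condof-beyondlinear}, yields $\int_{\R^d}\E|\sigma(u(s,y),h)|^2\mu(dh)\geq L^2\kappa\,\E|u(s,y)|^{2\rho}$; Jensen's inequality (since $\rho>1$) gives $\E|u(s,y)|^{2\rho}\geq(\E|u(s,y)|^2)^\rho$; and from $G_t\geq 0$, $\int G_t=1$, $u_0\geq\eta_2$ one has $(G_t\ast u_0)(x)\geq\eta_2$. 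Writing $p(u):=\int_{\R^d}G_u(y)^2\,dy$, translation invariance makes $p$ independent of the base point, and the scaling $G_t(x)=t^{-\beta d/\alpha}G_1(xt^{-\beta/\alpha})$ derived from the subordination \eqref{Eq:Green1} (with $\beta d/\alpha\in(0,1)$ and $G_1\in L^2(\R^d)$ both forced by $d<\min\{2,\beta^{-1}\}\alpha$) gives $p(u)=C_p\,u^{-\beta d/\alpha}$. Taking the infimum over $x$ yields the scalar Volterra inequality
\[m(t)\geq \eta_2^2 + L^2\kappa C_p\int_0^t(t-s)^{-\beta d/\alpha}m(s)^\rho\,ds.\]

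Next, iterate. From $m(s)\geq\eta_2^2=c_0 s^{a_0}$ with $a_0=0$, $c_0=\eta_2^2$, substituting the ansatz $m(s)\geq c_n s^{a_n}$ into the integral gives $m(t)\geq L^2\kappa C_p\,c_n^\rho B(1-\beta d/\alpha,a_n\rho+1)\,t^{1-\beta d/\alpha+\rho a_n}$, so by induction $m(t)\geq c_{n+1}t^{a_{n+1}}$ with $a_{n+1}=(1-\beta d/\alpha)+\rho a_n$ and $c_{n+1}=L^2\kappa C_p c_n^\rho B(1-\beta d/\alpha,a_n\rho+1)$. Solving the linear recursion, $a_n=(1-\beta d/\alpha)(\rho^n-1)/(\rho-1)\sim(1-\beta d/\alpha)\rho^n/(\rho-1)$. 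Taking logarithms, Stirling's asymptotic $B(1-\beta d/\alpha,a_n\rho+1)\asymp(a_n\rho)^{-(1-\beta d/\alpha)}$ and $\log a_n\sim n\log\rho$ turn the constants' recursion into $\log c_{n+1}\geq\rho\log c_n-(1-\beta d/\alpha)n\log\rho+O(1)$, whose solution is $\log c_n=A\rho^n+O(n)$ for some finite $A$ depending on $(\eta_2,\rho,\beta,d,\alpha,L,\kappa)$. Therefore
\[\log\bigl(c_n t^{a_n}\bigr)=\rho^n\Bigl(A+\tfrac{1-\beta d/\alpha}{\rho-1}\log t\Bigr)+O(n)\ \longrightarrow\ +\infty\quad\text{as }n\to\infty,\]
for every $t>T^\ast:=\exp\bigl(-A(\rho-1)/(1-\beta d/\alpha)\bigr)$. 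Consequently $m(t)=+\infty$ for all $t>T^\ast$.

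This contradicts the finiteness of $\E|u(t,x)|^2$ (needed for Walsh's isometry to make sense) and a fortiori any growth bound of the form $\E|u(t,x)|^2\leq C e^{\gamma t}$ implicit in $u\in\mathcal{L}^{\gamma,2}$, so no finite energy solution of \eqref{equcompen} can exist. The principal technical point is the bookkeeping of $c_n$ in the iteration, specifically establishing $\log c_n\sim A\rho^n$ so that the super-exponential growth of $a_n$ in $t^{a_n}$ is not canceled by decay of $c_n$; this uses the Stirling estimate for the Beta function, and crucially both hypotheses $\rho>1$ and $\beta d/\alpha<1$. The remaining ingredients---Walsh isometry, Jensen's inequality, the initial-data bound, and the $L^2$-scaling of $G_t$---are standard and parallel \cite{nane-mijena-2014}.
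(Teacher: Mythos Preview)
Your argument is correct and reaches the desired contradiction, but it follows a genuinely different path from the paper's own proof. The paper works with the Laplace transform $\tilde I(\theta)=\int_0^\infty e^{-\theta s}I(s)\,ds$ of $I(t)=\inf_x\E|u(t,x)|^2$: applying Jensen's inequality against the probability measure $\theta e^{-\theta s}\,ds$ to the Volterra inequality yields $\theta\tilde I(\theta)\ge \eta^2+\kappa L^2 C_1\theta^{-(1-\beta d/\alpha)}[\theta\tilde I(\theta)]^\rho$, from which one first shows $\tilde I(\theta)=\infty$ for $\theta<\theta_0$ and then bootstraps (substituting exponential lower bounds for $I$ back into the Volterra inequality) to push the divergence threshold from $\theta_0$ to $\rho\theta_0$, hence to all $\theta>0$; this directly contradicts the finite-energy condition $\int_0^\infty e^{-\rho_\ast t}\E|u(t,x)|^2\,dt<\infty$.

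Your route instead iterates power-type lower bounds $m(t)\ge c_n t^{a_n}$ in the Volterra inequality and tracks the recursions for $(a_n,c_n)$ via the Beta-function and a Stirling estimate, obtaining finite-time blow-up $m(t)=\infty$ for $t>T^\ast$. This is precisely the Fujita-type mechanism encapsulated in Proposition~\ref{nonlinearinequ}, which the paper invokes \emph{not} here but in the proof of Theorem~\ref{nonexistofcompen} (nonexistence of random-field solutions). In effect you have supplied a self-contained proof of Proposition~\ref{nonlinearinequ} and then used its conclusion to obtain the stronger statement---blow-up of the second moment in finite time---from which nonexistence of a finite-energy solution follows \emph{a fortiori}. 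The paper's Laplace-transform argument is more closely tailored to the finite-energy definition but yields a weaker pointwise conclusion; your approach is more direct, gives more, and simultaneously proves Theorem~\ref{nonexistofcompen}. Two minor remarks: the $L^2$ scaling of $G_t$ you derive is exactly Lemma~\ref{Lem:Green1}, so you may simply cite it; and your parenthetical attributes $G_1\in L^2(\R^d)$ to the full hypothesis $d<\min\{2,\beta^{-1}\}\alpha$, whereas only $d<2\alpha$ is needed there (the other half guarantees $\beta d/\alpha<1$, used for integrability in time).
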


 Under the same condition, there is also no random field solution to equation (\ref{equcompen}).

\begin{tm}\label{nonexistofcompen}
Let $d < \min\{2,\gamma^{-1}\}\alpha$. If the initial condition $u_0(x)$ is bounded below, then there is no random field solution to equation\eqref{equcompen} under Condition \ref{condoflip} .
\end{tm}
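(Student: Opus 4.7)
The plan is to argue by contradiction: suppose a random field solution $u$ to \eqref{equcompen} exists, so that $\E|u(t,x)|^2<\infty$ for every $(t,x)\in(0,\infty)\times\R^d$, and set
\[
M(t):=\inf_{x\in\R^d}\E|u(t,x)|^2,\qquad 0\le M(t)\le \E|u(t,x_0)|^2<\infty.
\]
I will turn the random field assumption into a Volterra-type lower bound for $M$ with superlinear nonlinearity and an integrable singular kernel, and then show this inequality forces $M(T)=\infty$ at some finite $T$, contradicting the display above.

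First I would square the mild formulation of \eqref{equcompen} and apply the Walsh-type isometry for the compensated Poisson measure $\tilde N$. Since the stochastic integral is mean-zero and $G_t\ast u_0$ is deterministic,
\begin{equation*}
\E|u(t,x)|^2=|(G_t\ast u_0)(x)|^2+\E\!\int_0^t\!\!\int_{\R^d}\!\!\int_{\R^d} G^2(t-s,x-y)|\sigma(u(s,y),h)|^2\,\mu(dh)\,dy\,ds.
\end{equation*}
Condition \ref{condoflip} yields $|\sigma(u,h)|^2\ge L^2\overline{J}(h)^2|u|^{2\rho}$, the bound $\int_{\R^d}\overline{J}(h)^2\mu(dh)\ge\kappa$ comes from Condition \ref{condof-beyondlinear}, and $u_0\ge\eta>0$ combined with $G_t$ being a probability density gives $(G_t\ast u_0)(x)\ge\eta$. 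Jensen's inequality (valid since $\rho>1$) delivers $\E|u(s,y)|^{2\rho}\ge M(s)^\rho$, while the scaling identity $\int_{\R^d}G^2(t,y)\,dy=C_0\,t^{-\beta d/\alpha}$ is finite thanks to $d<\min\{2,\beta^{-1}\}\alpha$. Assembling these and taking the infimum over $x$,
\begin{equation*}
M(t)\;\ge\;\eta^2+C\!\int_0^t(t-s)^{-\theta}M(s)^\rho\,ds,\qquad \theta:=\tfrac{\beta d}{\alpha}\in(0,1),
\end{equation*}
for a positive constant $C=C_0 L^2\kappa$ depending only on the parameters.

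The main obstacle is to show that this superlinear Volterra inequality forces $M$ to blow up in finite time. I would handle it by a monotone Picard-type iteration: define $v_0(t):=\eta^2$ and $v_{n+1}(t):=\eta^2+C\int_0^t(t-s)^{-\theta}v_n(s)^\rho\,ds$; induction gives $v_n(t)\le M(t)$ and $v_n\uparrow v_\infty\le M$. Iterating the convolution via the Beta function yields $v_n(t)\ge A_n\,t^{(1-\theta)s_n}$ with $s_n:=1+\rho+\cdots+\rho^{n-1}\sim\rho^n/(\rho-1)$, and tracking the recursion for $A_n$ using Stirling's asymptotics for the Beta coefficients shows $\rho^{-n}\log A_n\to a_\infty\in\R$ as $n\to\infty$. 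Consequently, for every $t>t_0:=\exp\!\bigl(-a_\infty(\rho-1)/(1-\theta)\bigr)$ one has $\log v_n(t)\sim\rho^n\bigl(a_\infty+\tfrac{1-\theta}{\rho-1}\log t\bigr)\to+\infty$, forcing $M(t)=+\infty$ and contradicting the finiteness of $M$ coming from the random field assumption. This completes the argument.
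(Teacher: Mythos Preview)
Your derivation of the nonlinear Volterra-type inequality
\[
M(t)\ge \eta^2 + C\int_0^t (t-s)^{-\theta} M(s)^\rho\,ds,\qquad \theta=\beta d/\alpha\in(0,1),
\]
is exactly what the paper does: same isometry, same use of Condition~\ref{condoflip}, the bound $\int \overline J(h)^2\mu(dh)\ge\kappa$, Jensen's inequality for $x\mapsto x^\rho$, and Lemma~\ref{Lem:Green1}. At that point the paper simply invokes Proposition~\ref{nonlinearinequ} (quoted from Asogwa--Mijena--Nane) as a black box to conclude that $M$ blows up in finite time.

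Your second half is different in that you supply, in effect, a self-contained proof of Proposition~\ref{nonlinearinequ}. The monotone iteration $v_{n+1}(t)=\eta^2+C\int_0^t(t-s)^{-\theta}v_n(s)^\rho\,ds$ with $v_0\equiv\eta^2$ is indeed dominated by $M$, the power-law ansatz $v_n(t)\ge A_n t^{(1-\theta)s_n}$ propagates through a Beta-function computation, and the recursion $A_{n+1}=C A_n^\rho\,B(1-\theta,(1-\theta)\rho s_n+1)$ together with $B(1-\theta,x+1)\sim\Gamma(1-\theta)x^{-(1-\theta)}$ does give the convergence of $\rho^{-n}\log A_n$. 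So your blow-up argument is correct. The trade-off is clear: the paper's route is shorter and modular (it reuses a known lemma), whereas yours is self-contained and makes the mechanism of finite-time blow-up explicit; the two are otherwise the same proof.
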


{ Bao and Yuan \cite{BaoYuan2016} studied the finite time blow-up in $L^p$-norm of stochastic reaction-diffusion equations with jumps within a bounded domain.    Li et al. \cite{LiPengJia2016} considered the blow-up in $L^p$-norm for a class of L\'evy noise driven SPDEs.
}

In the remainder of the section  we will state some  properties of  the solution to equation \eqref{equnoncompen}.
First, we will present a similar theorem about the existence and uniqueness of the equation \eqref{equnoncompen}.
\begin{tm}\label{existofnoncompen}
There exists a unique random field solution to the Equation \eqref{equnoncompen} under Condition \ref{condofnoncompen}.
\end{tm}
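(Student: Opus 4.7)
The plan is to mirror the Picard iteration used for Theorem~\ref{Existcompen}, replacing the $L^2$ Walsh-type isometry by the first-moment intensity formula for the non-compensated Poisson integral. Define $u^{(0)}(t,x) := \int_{\R^d} G(t,x-y)u_0(y)\,dy$ and, recursively,
\begin{equation*}
u^{(n+1)}(t,x) := u^{(0)}(t,x) + \int_0^t\!\int_{\R^d}\!\int_{\R^d} G(t-s,x-y)\,\sigma(u^{(n)}(s,y),h)\,N(ds\,dy\,dh).
\end{equation*}
Since $u_0$ is measurable and bounded and $G_t$ is a probability density (with $\int_{\R^d} G_t(y)\,dy = 1$), $u^{(0)}$ is a bounded deterministic function.

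The central tool is the intensity identity: for non-negative predictable $f$,
\begin{equation*}
\E \int_0^t\!\int_{\R^d}\!\int_{\R^d} f(s,y,h)\,N(ds\,dy\,dh) = \int_0^t\!\int_{\R^d}\!\int_{\R^d} \E f(s,y,h)\,ds\,dy\,\mu(dh).
\end{equation*}
Combined with $\sigma(0,h) = 0$, the Lipschitz bound $|\sigma(x,h)| \leq Lip_\sigma J(h)|x|$, and $\int_{\R^d} J(h)\mu(dh) \leq K$ from Condition~\ref{condofnoncompen}, the triangle inequality and Fubini give
\begin{equation*}
\E|u^{(n+1)}(t,x) - u^{(0)}(t,x)| \leq K\, Lip_\sigma \int_0^t\!\int_{\R^d} G(t-s,x-y)\,\E|u^{(n)}(s,y)|\,dy\,ds.
\end{equation*}
Writing $h_n(t) := \sup_{x \in \R^d} \E|u^{(n)}(t,x)|$, the identity $\int_{\R^d}G(t,y)\,dy=1$ reduces this to $h_{n+1}(t) \leq \|u_0\|_\infty + K\, Lip_\sigma \int_0^t h_n(s)\,ds$, so a Gronwall argument bounds $h_n$ uniformly in $n$ on every interval $[0,T]$.

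Applying the same estimate to $u^{(n+1)} - u^{(n)}$ yields
\begin{equation*}
\sup_{x} \E|u^{(n+1)}(t,x) - u^{(n)}(t,x)| \leq K\,Lip_\sigma \int_0^t \sup_y \E|u^{(n)}(s,y) - u^{(n-1)}(s,y)|\,ds,
\end{equation*}
which iterates to a summable bound of factorial type $(K\,Lip_\sigma\,T)^{n}/n!$. Hence $(u^{(n)})$ is Cauchy in the norm $\sup_{t \leq T,\,x \in \R^d}\E|\cdot|$; the limit $u$ is the desired mild solution, with the stochastic Young-type bound of Proposition~\ref{stochyoungnoncompen} supplying the convolution control of $G$ needed to pass to the limit inside the non-compensated integral. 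Uniqueness follows by the same inequality applied to the difference of two mild solutions, again closed by Gronwall.

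The main obstacle is that, unlike the compensated case, the non-compensated integral carries no $L^2$ isometry, so the Walsh-isometry proof of Theorem~\ref{Existcompen} cannot be imitated directly. The resolution is to recognise that the first-moment intensity identity, paired with the weaker $L^1(\mu)$ hypothesis on $J$ (rather than $L^2(\mu)$), is exactly what is needed to close the Picard scheme \emph{at the first-moment level}; once one commits to first moments rather than second moments, the proof is otherwise a faithful adaptation of the compensated-noise argument.
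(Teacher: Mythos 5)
Your proposal is correct and follows essentially the same route as the paper: a Picard iteration closed at the level of first moments via the intensity identity \eqref{isometrynon}, the bound $|\sigma(x,h)|\leq Lip_\sigma J(h)|x|$ with $J\in L^1(\mu)$, and $\int_{\R^d}G_t(y)\,dy=1$. The only difference is cosmetic: you close the scheme with a Gronwall/factorial bound on each $[0,T]$, whereas the paper works in the exponentially weighted norm $\|\cdot\|_{1,\gamma}$ and chooses $\gamma$ large enough that $\mathcal{B}$ is a strict contraction (Proposition \ref{stochyoungnoncompen}), which gives the same conclusion.
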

If the growth of $\sigma$ is linear, the next theorem shows that the solution of \eqref{equnoncompen} grows exponentially.
\begin {cond}\label{condofnoncompen1}
There exists a positive function $\overline{J}$ and a constant L such that for all $x, h\in \R^d$, we have
$$
|\sigma(x,h)|\geq L \overline{J}(h)|x|.
$$
The function $\overline{J}$ is assumed to satisfy the following integrability condition $\int_{\R^d} \overline J(h)dh\geq \kappa$, where $\kappa $ is some positive constant.

\end{cond}

\begin{tm}\label{intermittency-noncomp}
 Suppose  that Condition \ref{condofnoncompen} and Condition \ref{condofnoncompen1} hold. If the initial condition $u_0(x)$ is bounded below, then the solution to equation \eqref{equnoncompen} satisfies
 $$
\inf_{x\in \R^d} \E|u(t,x)|\geq c_1e^{c_2 t}  \hbox{  for all }  t>0.
$$
where $c_1:= \inf_{x \in R^d}|u(0,x)|$ and $c_2$ depends on $\alpha,  \beta, d, \kappa $ and $L$ in Condition  \ref{condofnoncompen1}.

\end{tm}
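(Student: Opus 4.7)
My plan is to turn the SPDE into a scalar integral inequality for $F(t):=\inf_{x\in\R^d}\E|u(t,x)|$ and invoke Gr\"onwall's lemma. The distinguishing feature of the non-compensated case is that Campbell's formula gives $\E[N(ds,dy,dh)] = ds\,dy\,\mu(dh)\neq 0$, so taking expectations of the mild solution
$$u(t,x)=(G_t*u_0)(x)+\int_0^t\!\!\int_{\R^d}\!\!\int_{\R^d}G(t-s,x-y)\sigma(u(s,y),h)\,N(ds,dy,dh)$$
converts the stochastic integral into a genuine drift. Fubini (justified by the integrability bounds already established in the proof of Theorem \ref{existofnoncompen}) then yields
$$\E u(t,x)=(G_t*u_0)(x)+\int_0^t\!\!\int_{\R^d}\!\!\int_{\R^d}G(t-s,x-y)\,\E\sigma(u(s,y),h)\,ds\,dy\,\mu(dh).$$

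Second, I would combine Condition \ref{condofnoncompen1} with positivity of $u$ to deduce $\E\sigma(u(s,y),h)\geq L\overline{J}(h)\,\E|u(s,y)|$. Using $\inf_x (G_t*u_0)(x)\geq \eta_2:=\inf_x u_0(x)$, $\int_{\R^d}\overline{J}(h)\,\mu(dh)\geq\kappa$, and the spatial mass-conservation $\int_{\R^d} G(t-s,x-y)\,dy=1$ (which follows because, by \eqref{Eq:Green1}, $G$ is the transition density of $X(E_t)$), this gives
$$\E u(t,x)\geq \eta_2 + L\kappa\int_0^t F(s)\,ds.$$
Because $u\geq 0$ we have $\E|u|=\E u$, and taking the infimum over $x$ on the left produces the scalar inequality $F(t)\geq \eta_2 + L\kappa\int_0^t F(s)\,ds$. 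Gr\"onwall's inequality then delivers $F(t)\geq \eta_2 e^{L\kappa t}$, which is exactly the desired bound with $c_1=\eta_2$ and $c_2=L\kappa$.

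The hard part is the positivity of $u$. Condition \ref{condofnoncompen1} only controls $|\sigma|$, so additional input is needed to pass from $|\sigma(u,h)|\geq L\overline J(h)|u|$ to $\E\sigma(u,h)\geq L\overline J(h)\E u$. My approach is to argue by Picard iteration: set $u^{(0)}\equiv u_0\geq \eta_2>0$ and observe that each iterate $u^{(n+1)}(t,x)=(G_t*u_0)(x)+\int G(t-s,x-y)\sigma(u^{(n)}(s,y),h)\,N(ds,dy,dh)$ stays non-negative provided $\sigma(x,h)\geq 0$ for $x\geq 0$ (a natural companion to $\sigma(0,h)=0$ from Condition \ref{condofnoncompen}), since the Poisson atoms contribute only non-negative terms and the linear part $G_t*u_0$ is $\geq\eta_2$; the uniqueness in Theorem \ref{existofnoncompen} then transfers positivity to $u$ itself. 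Once this is granted, the remaining chain---Campbell, Fubini, the $\sigma$ lower bound, and Gr\"onwall---is routine, and it is worth noting that the fractional exponent $\beta$ enters only through the identity $\int_{\R^d}G(t-s,x-y)\,dy=1$, so the growth rate $c_2$ in this first-moment bound does not depend on $\beta$.
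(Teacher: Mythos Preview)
Your approach mirrors the paper's: both take the expectation of the mild formulation, use the lower bound from Condition \ref{condofnoncompen1} together with $\int_{\R^d} G(t-s,x-y)\,dy=1$ to reduce to a scalar inequality for $F(t)=\inf_{x}\E|u(t,x)|$, and finish with an exponential growth lemma. The paper invokes Proposition \ref{expestimate} with $\rho=1$, which is precisely the Gr\"onwall bound you apply. Where you differ is in care about signs: the paper simply writes $\E|u(t,x)|=(P_tu_0)(x)+\int G\,\E|\sigma|\,ds\,dy\,\mu(dh)$ as a consequence of \eqref{isometrynon}, which tacitly requires $u\geq 0$ and $\sigma(u,h)\geq 0$---exactly the issue you isolate and address by adding the natural sign hypothesis $\sigma(x,h)\geq 0$ for $x\geq 0$ and propagating positivity through the Picard iterates. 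Your observation that $c_2=L\kappa$ does not involve $\beta$ is also correct; the paper's proof carries a stray factor $C^*$ (the constant governing $\int G^2$, not $\int G=1$) into its final inequality.
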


\begin{cond}\label{condoflipnoncompen2}
There exists a constant  $L ,\lambda$ and a positive function $\overline{J}$, such that $\lambda>1$ and for all $x,h\in {\R^d},$ we have
$$|\sigma(x,h)|\geq L \overline{J}(h) |x|^\lambda,$$

where the function $\overline{J}$ are the same as in Condition \ref{condofnoncompen1}.

\end{cond}

\begin{tm}\label{no-rf-sol-noncom}If the initial condition $u_0(x)$ is bounded below, then there is no random field solution to equation\eqref{equnoncompen} under Condition \ref{condoflipnoncompen2}.
\end{tm}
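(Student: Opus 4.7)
The plan is to derive a scalar nonlinear integral inequality for the first moment of $u$ and to show that its solution must blow up in finite time, contradicting the assumed existence of a random field solution.

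Assume, toward a contradiction, that a random field solution $u$ to \eqref{equnoncompen} exists. From the mild formulation
\begin{equation*}
u(t,x) = (G_t * u_0)(x) + \int_0^t\int_{\R^d}\int_{\R^d} G(t-s,x-y)\sigma(u(s,y),h)\,N(ds\,dy\,dh),
\end{equation*}
together with $u_0 \geq \eta_2 > 0$, $G \geq 0$, $N \geq 0$, and the natural sign convention $\sigma(x,h) \geq 0$ for $x \geq 0$ (consistent with the pointwise lower bound of Condition \ref{condoflipnoncompen2}), one obtains $u(t,x) \geq \eta_2$ almost surely. Applying $\E[\cdot]$ and invoking Campbell's formula for non-compensated Poisson integrals (valid by non-negativity of the integrand) gives
\begin{equation*}
\E u(t,x) = (G_t * u_0)(x) + \int_0^t\int_{\R^d}\int_{\R^d} G(t-s,x-y)\E[\sigma(u(s,y),h)]\,ds\,dy\,\mu(dh).
\end{equation*}
Combining $\sigma(u,h) \geq L\bar J(h)u^\lambda$ from Condition \ref{condoflipnoncompen2}, Jensen's inequality $\E[u^\lambda] \geq (\E u)^\lambda$ (since $\lambda > 1$), and the integrability $\int \bar J\,\mu(dh) \geq \kappa$ from Condition \ref{condofnoncompen1} (interpreted consistently with the other integrability conditions of the paper), I reach
\begin{equation*}
\E u(t,x) \geq \eta_2 + L\kappa\int_0^t\int_{\R^d} G(t-s,x-y)(\E u(s,y))^\lambda\,ds\,dy.
\end{equation*}

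Setting $F(t) := \inf_{x \in \R^d}\E u(t,x)$, using $(\E u(s,y))^\lambda \geq F(s)^\lambda$ and $\int_{\R^d}G(t-s,x-y)\,dy = 1$, the inequality collapses to the scalar Bihari-type form
\begin{equation*}
F(t) \geq \eta_2 + L\kappa\int_0^t F(s)^\lambda\,ds.
\end{equation*}
Comparison with the ODE $\dot g = L\kappa g^\lambda$, $g(0)=\eta_2$, whose explicit solution $g(t) = (\eta_2^{1-\lambda} - (\lambda-1)L\kappa t)^{-1/(\lambda-1)}$ blows up at $T^* := \eta_2^{1-\lambda}/((\lambda-1)L\kappa)$, forces $F(t) \to +\infty$ as $t \uparrow T^*$, so $\E u(t,x) = +\infty$ for all $x$ once $t$ is close enough to $T^*$.

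The main obstacle is to convert blow-up of the first moment into genuine non-existence, since a random field solution only requires $u(t,x)<\infty$ almost surely rather than $\E u(t,x)<\infty$. To close this gap I would exploit the non-compensated Poisson structure: the stochastic integral is a pathwise sum of non-negative terms, so by the Campbell dichotomy it is a.s.\ finite if and only if $\int \min(1,G(t-s,x-y)\sigma(u(s,y),h))\,ds\,dy\,\mu(dh) < \infty$ almost surely. Bootstrapping the integral inequality on an \emph{almost-sure} lower bound for $u$ (replacing $\eta_2$ by progressively sharper a.s.\ envelopes that follow $\dot g = L\kappa g^\lambda$) yields a sequence forcing $\int \min(1,G\sigma) = +\infty$ a.s.\ for $t$ close to $T^*$. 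Hence $u(t,x) = +\infty$ a.s.\ for such $t$, contradicting $u$ being a random field.
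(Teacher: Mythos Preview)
Your derivation of the scalar inequality
\[
F(t) \;\geq\; \eta_2 + L\kappa \int_0^t F(s)^{\lambda}\,ds, \qquad F(t):=\inf_{x\in\R^d}\E\,u(t,x),
\]
and the conclusion that $F$ blows up at a finite time, is exactly the argument the paper gives (the paper simply invokes Proposition~\ref{nonlinearinequ} with $\theta=0$ instead of the explicit Bihari comparison, but the content is the same).

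The final paragraph, however, is both unnecessary and based on a misreading of the paper's definitions. In this paper a \emph{random field solution} to \eqref{equnoncompen} is, by Definition preceding Definition~\ref{comp-space1}, a mild solution satisfying
\[
\sup_{0\le t\le T}\sup_{x\in\R^d}\E|u(t,x)|<\infty \quad \text{for all } T>0.
\]
Thus blow-up of the first moment at some $T^*<\infty$ \emph{is} the contradiction; nothing further is required. Your ``Campbell dichotomy / a.s.\ bootstrapping'' step attempts to upgrade first-moment blow-up to a.s.\ blow-up, which is a strictly harder statement than what is claimed, and as written is only a heuristic sketch rather than a proof. Drop that paragraph and the argument is complete and matches the paper.
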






\section{Preliminaries}\label{sec:3}

In this section, we give some preliminary results that will be needed in the remaining sections of the paper.

We first have the following lemma from \cite{nane-mijena-2014}.

\begin{lem}[Lemma 1 in Mijena and Nane \cite{nane-mijena-2014} ]\label{Lem:Green1} For $d < 2\a,$
\begin{equation}\label{Eq:Greenint}
\int_{{\R^d}}G^2_t(x)dx  =C^\ast t^{-\beta d/\a}
\end{equation}
where $C^\ast = \frac{(\nu )^{-d/\a}2\pi^{d/2}}{\a\Gamma(\frac d2)}\frac{1}{(2\pi)^d}\int_0^\infty z^{d/\a-1} (E_\beta(-z))^2 dz.$
\end{lem}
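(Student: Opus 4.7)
The plan is to compute the integral via Plancherel's theorem by first obtaining a clean Fourier-side formula for $G_t$, and then reducing to a one-dimensional radial integral to which the change of variables $z = \nu |\xi|^\alpha t^\beta$ can be applied.

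First I would use the subordination representation \eqref{Eq:Green1} together with the Fourier formula \eqref{Eq:F_pX} to compute
\begin{equation*}
\widehat{G_t}(\xi) = \int_0^\infty e^{-s\nu |\xi|^\alpha} f_{E_t}(s)\,ds = E_\beta(-\nu|\xi|^\alpha t^\beta),
\end{equation*}
where I invoke the well-known Laplace transform identity $\E[e^{-\lambda E_t}] = E_\beta(-\lambda t^\beta)$ (readily proved from \eqref{Etdens0} and the definition of the Mittag-Leffler function). Plancherel then gives
\begin{equation*}
\int_{\R^d} G_t^2(x)\,dx = \frac{1}{(2\pi)^d}\int_{\R^d} E_\beta(-\nu|\xi|^\alpha t^\beta)^2\,d\xi.
\end{equation*}

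Next, since the integrand is radial, I would switch to polar coordinates using the standard surface-area factor $2\pi^{d/2}/\Gamma(d/2)$ for the unit sphere $S^{d-1}$, obtaining
\begin{equation*}
\int_{\R^d} G_t^2(x)\,dx = \frac{1}{(2\pi)^d}\cdot\frac{2\pi^{d/2}}{\Gamma(d/2)}\int_0^\infty r^{d-1} E_\beta(-\nu r^\alpha t^\beta)^2\,dr.
\end{equation*}
Then the substitution $z = \nu r^\alpha t^\beta$, for which $r^{d-1}\,dr = \alpha^{-1}(\nu t^\beta)^{-d/\alpha} z^{d/\alpha -1}\,dz$, cleanly extracts the $t^{-\beta d/\alpha}$ factor and leaves precisely the constant $C^\ast$ asserted in the lemma.

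The one nontrivial step, which I view as the main point to address carefully, is verifying that the integral $\int_0^\infty z^{d/\alpha - 1}(E_\beta(-z))^2\,dz$ is finite. Near $z=0$ we have $E_\beta(-z) \to 1$, so integrability reduces to $d/\alpha > 0$, which holds. For large $z$ one uses the classical asymptotic $E_\beta(-z) \sim 1/(\Gamma(1-\beta)z)$, whence $(E_\beta(-z))^2 \sim c z^{-2}$, and integrability at infinity requires $d/\alpha - 3 < -1$, i.e., $d < 2\alpha$, which is exactly the hypothesis of the lemma. Combining the computations in the previous paragraph with this convergence check yields \eqref{Eq:Greenint} with the stated constant $C^\ast$.
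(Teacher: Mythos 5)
Your argument is correct and is essentially the standard proof of this identity: the paper itself does not prove the lemma (it imports it from Lemma 1 of Mijena and Nane \cite{nane-mijena-2014}), and the proof there proceeds exactly as you do, via $\widehat{G_t}(\xi)=E_\beta(-\nu|\xi|^\alpha t^\beta)$, Plancherel, polar coordinates, the substitution $z=\nu r^\alpha t^\beta$, and the Mittag--Leffler asymptotics $E_\beta(-z)\sim 1/(\Gamma(1-\beta)z)$ to see that $d<2\alpha$ is exactly what makes $\int_0^\infty z^{d/\alpha-1}(E_\beta(-z))^2\,dz$ finite. Your constant matches the stated $C^\ast$, so nothing is missing.
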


Next we define the stochastic integrals with respect to Poisson random measures by  giving the definition of simple random field.\\

Using the filtration of $\mathcal{F}_t=\sigma\{N(t), t\geq 0\}$, we say a random field $\Phi$ is\textit{ elementary} if it has the following representation:

$$\Phi_t(x,h)=X1_{(a,b]}(t)\phi(x,h) \quad \hbox{for all } t\geq 0 \hbox{ and }  x\in \R^d$$
 for some $0\leq a \leq b< \infty $, where $ X$ is a $\mathcal{F}_a $ measurable random variable in $L^2(\Omega)$, and $\phi$ is non-random, bounded and measurable. It is natural to define the stochastic integral$$\int h\Phi N(\mathrm{d}s\mathrm{d}x\mathrm{d}h)=X\int_{(a,b]
\times \R^d\times \R^d}h_t(x,h)\phi(x,h) N(\mathrm{d}s\mathrm{d}x\mathrm{d}h)$$
for any $h\in L^2(\R_+, \R^d,\R^d)$.\\

A random field $\Phi$ is \textit{simple} if there exist elementary random fields $\Phi^{(1)},...,\Phi^{(n)}$ with disjoint support  such that $\int h\Phi \mathrm{d}N=\Sigma_{i=1}^{n}\int h\Phi^{(i)}\mathrm{d}N$.\\ (See \cite{khoshnevisan-cbms} for detailed definition based on the white noise.)

With this notion, we have the following sequence of definitions.
\begin{defin}
Suppose that $X(s,x, h)$ is a predictable process such that
$$ \int_0^T\int_{\R^d}\int_{\R^d} \E|X(s,x,h)|\mathrm{d}s\mathrm{d}x\mu(\mathrm{d}h)<\infty,$$
then
$$ \int_0^T\int_{\R^d}\int_{\R^d} X(s,x,h)N(\mathrm{d}s\mathrm{d}x\mathrm{d}h)$$ is well-defined and satisfies the following  isometry
\begin{equation}\label{isometrynon}
\begin{split}
 &\E \int_0^T\int_{\R^d}\int_{\R^d} X(s,x,h)N(\mathrm{d}s\mathrm{d}x\mathrm{d}h)\\
 &=\int_0^T\int_{\R^d}\int_{\R^d} \E X(s,x,h)\mathrm{d}s\mathrm{d}x\mu(\mathrm{d}h).
 \end{split}
 \end{equation}
 \end{defin}

  We then have the following definition.

 \begin{defin}
Suppose that $X(s,x, h)$ is a predictable process such that
$$ \int_0^T\int_{\R^d}\int_{\R^d} \E|X(s,x,h)|^2\mathrm{d}s\mathrm{d}x\mu(\mathrm{d}h)<\infty,$$
then
$$ \int_0^T\int_{\R^d}\int_{\R^d} X(s,x,h)\tilde{N}(\mathrm{d}s\mathrm{d}x\mathrm{d}h)$$ is well-defined and satisfies the following  isometry
\begin{equation}\label{isometry}
\begin{split}
 &\E \bigg|\int_0^T\int_{\R^d}\int_{\R^d} X(s,x,h)\tilde{N}(\mathrm{d}s\mathrm{d}x\mathrm{d}h)\bigg|^2\\
 &=\int_0^T\int_{\R^d}\int_{\R^d} \E X^2(s,x,h)\mathrm{d}s\mathrm{d}x\mu(\mathrm{d}h).
 \end{split}
 \end{equation}
 \end{defin}

We are now ready to state the precise meaning of the solutions. We define the \textit{mild solution} of equation \eqref{equcompen} first.
\begin{defin}
We say that a random field $\{u(t,x)\}_{t>0,x \in \R^d}$ is a mild solution of equation \eqref{equcompen} if a.s., the following is satisfied
$$u(t,x)=\int_{\R^d}G(t,x-y){u_0}(y)\mathrm{d}y+\int_0^t \int_{\R^d}\int_{\R^d} G(t-s,x-y)\sigma(u(s,y),h)\tilde{N}(\mathrm{d}s\mathrm{d}x\mathrm{d}h).$$
\end{defin}
Since we are mainly  interested in the second moment of the solution of equation \eqref{equcompen}, and we say that if $\{u(t,x)\}_{t>0,x \in \R^d}$ satisfies the following condition
$$
\sup_{0\leq t\leq T}\sup_{x\in \R^d} \E|u(t,x)|^2<\infty.
$$
for all $T> 0$, then $\{u(t,x)\}_{t>0,x \in \R^d}$ is a $random$ $field$ $ solution$ to the equation \eqref{equcompen}.

For any $\gamma>0$, define the following norm
$$||u||_{2,\gamma}:=\{\mbox{sup}_{t>0}\mbox{sup}_{x \in \R^{d}}e^{-\gamma t}\E[|u(t,x)|^2]\}^{1/2}.$$

\begin{defin}\label{comp-space}
We denote by $\mathcal{L}^{\gamma, 2}$  the completion of the space of all simple  random  field in the norm $\||u||_{2,\gamma}.$

\end{defin}

Switching to the solution of equation \eqref{equnoncompen}, we similarly define the \textit{mild solution} of equation  \eqref{equnoncompen}.
\begin{defin}
We say that a random field $\{u(t,x)\}_{t>0,x \in \R^d}$ is a mild solution of equation  \eqref{equnoncompen} if a.s.,the following is satisfied
$$u(t,x)=\int_{\R^d}G(t,x-y){u_0}(y)\mathrm{d}y+\int_0^t \int_{\R^d}\int_{\R^d} G(t-s,x-y)\sigma(u(s,y,h)N(\mathrm{d}s\mathrm{d}x\mathrm{d}h).$$
\end{defin}
We are interested in the first moment of the solution of equation\eqref{equnoncompen}, and we say that if $\{u(t,x)\}_{t>0,x \in \R^d}$ satisfies the following condition
$$
\sup_{0\leq t\leq T}\sup_{x\in \R^d} \E|u(t,x)|<\infty.
$$
for all $T> 0$, then $\{u(t,x)\}_{t>0,x \in \R^d}$ is a $random$ $field$ $ solution$ to the equation \eqref{equnoncompen}.

Define the following norm  for any $ \gamma>0$
$$||u||_{1,\gamma}:=\{\mbox{sup}_{t>0}\mbox{sup}_{x \in \R^{d}}e^{-\gamma t}\E[|u(t,x)|]\}.$$

\begin{defin}\label{comp-space1}
We denote by $\mathcal{L}^{\gamma, 1}$  the completion of the space of all simple  random fields in the norm $\||u||_{1,\gamma}.$
\end{defin}

We also quote the following propositions that will be needed in the proof of our main results.

\begin{prop}[Proposition 2.12 in  Foondun and Nane \cite{mohammud-nane}]\label{expestimateupper}
Let $\rho>0$ and suppose $f(t)$ is nonnegative, locally integrable functions satisfying
$$
f(t)\leq c_1+\kappa^{'}\int_0^t (t-s)^{\rho-1}f(s)\mathrm{d}s,\text{ for all }t>0,
$$
where $c_1$ is some positive number. Then, we have
$$
f(t)\leq c_2\mathrm{exp}(c_3(\Gamma(\rho))^{1/\rho}(\kappa^{'})^{1/\rho}t),\text{ for all }t>0.
$$
for some positive constants $c_2$ and $c_3$.

\end{prop}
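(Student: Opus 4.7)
The plan is a standard Picard iteration leading to a Mittag--Leffler bound, followed by the classical exponential asymptotic of that function. Rewrite the inequality as $f\le c_1+Tf$, where $T$ is the monotone linear operator
$$(Tg)(t):=\kappa'\int_0^t(t-s)^{\rho-1}g(s)\,ds$$
acting on nonnegative locally integrable functions. Iterating the inequality $n$ times (allowed by monotonicity of $T$ and nonnegativity of $f$) yields
$$f(t)\le c_1\sum_{k=0}^{n-1}(T^k\mathbf 1)(t)+(T^n f)(t),$$
where $\mathbf 1$ denotes the constant function $1$.

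First I would compute the iterated kernel $T^n\mathbf 1$ in closed form. A direct induction using the Beta integral $\int_0^t(t-s)^{a-1}s^{b-1}\,ds=\frac{\Gamma(a)\Gamma(b)}{\Gamma(a+b)}t^{a+b-1}$ gives
$$(T^n\mathbf 1)(t)=\frac{(\kappa'\Gamma(\rho))^n}{\Gamma(n\rho+1)}\,t^{n\rho}.$$
The same induction shows that on any fixed interval $[0,T]$
$$(T^n f)(t)\le\frac{(\kappa'\Gamma(\rho))^n}{\Gamma(n\rho)}\int_0^t(t-s)^{n\rho-1}f(s)\,ds,$$
which tends to $0$ as $n\to\infty$ because $f$ is locally integrable and $\Gamma(n\rho)$ grows faster than any geometric factor. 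Passing to the limit therefore produces
$$f(t)\le c_1\sum_{n=0}^{\infty}\frac{(\kappa'\Gamma(\rho))^n\,t^{n\rho}}{\Gamma(n\rho+1)}=c_1\,E_\rho\bigl(\kappa'\Gamma(\rho)\,t^{\rho}\bigr),$$
where $E_\rho$ denotes the one-parameter Mittag--Leffler function.

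The proof is then finished by invoking the classical asymptotic $E_\rho(z)=\frac{1}{\rho}e^{z^{1/\rho}}+O(z^{-1})$ as $z\to\infty$ (valid for $\rho>0$), which upgrades, via continuity of $E_\rho$ at $0$, to a uniform estimate $E_\rho(z)\le C\,e^{C' z^{1/\rho}}$ for all $z\ge 0$. Taking $z=\kappa'\Gamma(\rho)\,t^{\rho}$ gives $z^{1/\rho}=(\Gamma(\rho))^{1/\rho}(\kappa')^{1/\rho}\,t$, and the claimed bound with $c_2=c_1 C$ and $c_3=C'$ follows at once.

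The main obstacle, such as it is, lies in making the termwise passage $(T^n f)(t)\to 0$ rigorous on each finite interval: one must combine local integrability of $f$ with the Gamma-function control on the iterated kernel to dominate the tail. After that the remaining steps are exact kernel arithmetic and a black-box Mittag--Leffler asymptotic, so no further delicate analysis is needed.
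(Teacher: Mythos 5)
This proposition is imported verbatim from Foondun and Nane \cite{mohammud-nane} and the present paper gives no proof of it, so there is nothing internal to compare against; your argument is the standard proof of the weakly singular (Henry-type) Gronwall inequality via iteration of the Volterra operator, the semigroup identity for the kernels $(t-s)^{n\rho-1}/\Gamma(n\rho)$, and the exponential asymptotics of $E_\rho$, and it is correct. The only caveat worth flagging is that the precise expansion $E_\rho(z)=\tfrac1\rho e^{z^{1/\rho}}+O(z^{-1})$ holds in that form only for $0<\rho<2$ (for $\rho\ge 2$ there are additional exponential terms), but the upper bound $E_\rho(z)\le C e^{C'z^{1/\rho}}$ you actually use survives for all $\rho>0$, and in this paper's application $\rho=1-\beta d/\alpha\in(0,1)$ anyway.
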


\begin{prop}[Proposition 2.13 in  Foondun and Nane \cite{mohammud-nane}]\label{expestimate}
Let $\rho>0$ and suppose $f(t)$ is nonnegative, locally integrable functions satisfying
$$
f(t)\geq c_1+\kappa^{'}\int_0^t (t-s)^{\rho-1}f(s)\mathrm{d}s,\text{ for all }t>0,
$$
where $c_1$ is some positive number. Then, we have
$$
f(t)\geq c_2\mathrm{exp}(c_3(\Gamma(\rho))^{1/\rho}(\kappa^{'})^{1/\rho}t),\text{ for all }t>0.
$$
for some positive constants $c_2$ and $c_3$.
\end{prop}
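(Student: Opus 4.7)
The plan is to iterate the fractional integral inequality, recognize the resulting lower bound as a Mittag--Leffler series, and then invoke the classical exponential asymptotics of the Mittag--Leffler function. Introduce the operator $(Kg)(t) := \int_0^t (t-s)^{\rho-1} g(s)\,ds$, so that the hypothesis reads $f \ge c_1 + \kappa' K f$ with $f \ge 0$. As a preparatory computation I would establish by induction on $n$ the identity
$$K^n \mathbf{1}(t) \;=\; \frac{[\Gamma(\rho)]^n}{\Gamma(n\rho+1)}\, t^{n\rho},$$
using the Beta function relation $\int_0^t (t-s)^{\rho-1} s^{n\rho}\,ds = B(\rho,\,n\rho+1)\, t^{(n+1)\rho}$ together with $B(\rho,n\rho+1) = \Gamma(\rho)\Gamma(n\rho+1)/\Gamma((n+1)\rho+1)$.

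Iterating the hypothesis $N$ times and using at each step that $K$ preserves nonnegativity, I would obtain
$$f(t) \;\ge\; c_1 \sum_{n=0}^{N-1} (\kappa')^n K^n \mathbf{1}(t) \;+\; (\kappa')^N K^N f(t) \;\ge\; c_1 \sum_{n=0}^{N-1} \frac{(\kappa' \Gamma(\rho))^n\, t^{n\rho}}{\Gamma(n\rho+1)},$$
where the remainder term is discarded since it is nonnegative. Letting $N \to \infty$ yields the clean lower bound
$$f(t) \;\ge\; c_1\, E_\rho\bigl(\kappa' \Gamma(\rho)\, t^\rho\bigr),$$
where $E_\rho(z) = \sum_{n=0}^\infty z^n / \Gamma(n\rho+1)$ is the Mittag--Leffler function of order $\rho$.

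To finish, I would invoke the classical asymptotic $E_\rho(z) \sim \rho^{-1}\exp(z^{1/\rho})$ as $z \to +\infty$ on the positive real axis, valid for every $\rho > 0$. This produces a bound of the form $E_\rho(\kappa' \Gamma(\rho)\, t^\rho) \ge c\, \exp\bigl((\kappa' \Gamma(\rho))^{1/\rho} t\bigr)$ for all sufficiently large $t$, and on any bounded initial interval the same bound holds trivially after shrinking the constant, since $E_\rho$ is continuous with $E_\rho(0) = 1$. Absorbing the remaining numerical factors into $c_2$ and $c_3$ delivers the desired estimate
$$f(t) \;\ge\; c_2 \exp\!\bigl(c_3\, (\Gamma(\rho))^{1/\rho} (\kappa')^{1/\rho}\, t\bigr).$$
The one delicate step is this last Mittag--Leffler asymptotic, where the constants $c_2, c_3$ must be carefully extracted and the small-$t$ regime handled separately; the rest reduces to a routine Beta-function induction and termwise comparison of nonnegative series.
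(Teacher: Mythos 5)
Your argument is correct: the Beta-function induction for $K^n\mathbf{1}$, the termwise iteration yielding $f(t)\ge c_1 E_\rho(\kappa'\Gamma(\rho)t^\rho)$, and the exponential lower bound for $E_\rho$ on the positive real axis (with the small-$t$ regime absorbed into $c_2$) are all sound. The paper itself imports this proposition from Foondun and Nane \cite{mohammud-nane} without reproving it, and the proof there is exactly this renewal-iteration/Mittag--Leffler comparison, so your route matches the source.
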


\begin{prop}[Proposition 2.12 in Asogwa et al. \cite{asogwa-foondun-mijena-nane-2017}]\label{nonlinearinequ}
Let $\theta>0$. Suppose $h$ is a non-negative function satisfying the following non-linear integral inequality,
$$
h(t)\geq C+D\int_0^t\frac{h(s)^{1+\gamma}}{(t-s)^{\theta}}\,\mathrm{d} s,\quad \text{for}\quad t>0
$$
where $C$, $D$ and $\gamma$ are positive numbers. Then for any $C>0$ there exists $t_0>0$ such that $h(t)=\infty$ for all $t\geq t_0$.
\end{prop}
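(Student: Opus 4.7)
The plan is to prove the blow-up conclusion by constructing an explicit sequence of lower bounds on $h$ obtained by iterating the integral inequality, and then showing that these lower bounds diverge at every $t$ beyond a threshold $t_0$. Since $h$ is non-negative, the inequality already yields $h(t)\ge C$ for all $t>0$, which gives the base step for a Picard-type iteration. I expect to use in an essential way that $\theta<1$ (otherwise the kernel $(t-s)^{-\theta}$ fails to be locally integrable and the hypothesis would be vacuous for locally integrable $h$); this is the implicit range in which the result is applied in the paper.

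More concretely, I would inductively prove a lower bound of the form $h(t)\ge A_n t^{\alpha_n}$ on $(0,\infty)$, with $A_0=C$ and $\alpha_0=0$. Feeding such a bound into the right-hand side of the inequality and using the Beta-function identity
\[
\int_0^t \frac{s^{a}}{(t-s)^{\theta}}\,\mathrm{d}s \;=\; B(a+1,\,1-\theta)\, t^{a+1-\theta},
\]
I obtain the recursion
\[
\alpha_{n+1} \;=\; (1+\gamma)\alpha_n + (1-\theta), \qquad A_{n+1} \;=\; D\, A_n^{1+\gamma}\, B\bigl((1+\gamma)\alpha_n+1,\,1-\theta\bigr).
\]
Solving the linear recursion gives the closed form $\alpha_n = \frac{1-\theta}{\gamma}\bigl((1+\gamma)^n-1\bigr)$, so $\alpha_n$ grows like $(1+\gamma)^n$. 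For $A_n$, taking logarithms yields $\log A_{n+1} = (1+\gamma)\log A_n + \log D + \log B((1+\gamma)\alpha_n+1,1-\theta)$, and the asymptotic $B(a,1-\theta)\sim \Gamma(1-\theta)\,a^{-(1-\theta)}$ as $a\to\infty$ shows that the inhomogeneous term grows only logarithmically in $n$. A straightforward comparison argument then gives $\log A_n = \kappa\,(1+\gamma)^n + O(n)$ for some constant $\kappa=\kappa(C,D,\gamma,\theta)$, so that $A_n$ itself grows doubly exponentially in $n$.

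The main technical obstacle is controlling the constant $\kappa$ carefully enough to extract a quantitative blow-up time: the Beta-function factors decay polynomially in $\alpha_n$, and one must show this decay is dominated by the $(1+\gamma)^n$ growth coming from the $A_n^{1+\gamma}$ factor. Once this is handled, both $\alpha_n$ and $\log A_n$ grow proportionally to $(1+\gamma)^n$, and
\[
\log\bigl(A_n t^{\alpha_n}\bigr) \;=\; \log A_n + \alpha_n \log t \;\sim\; \bigl(\kappa + c\log t\bigr)(1+\gamma)^n,
\]
with $c=(1-\theta)/\gamma>0$. Defining $t_0 := e^{-\kappa/c}$ (which depends on $C$, $D$, $\gamma$, $\theta$ and becomes smaller as $C$ increases), the bracketed coefficient is strictly positive for every $t>t_0$, forcing $A_n t^{\alpha_n}\to\infty$ and hence $h(t)=\infty$ for all $t\ge t_0$, as claimed. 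A minor cleanup step will confirm that the inductive step $h(t)\ge A_n t^{\alpha_n}\Rightarrow h(t)\ge A_{n+1}t^{\alpha_{n+1}}$ actually uses only the non-negative integral term (discarding the constant $C$ after the first step), and that monotonicity in $C$ of the threshold $t_0$ matches the "for any $C>0$" quantification in the statement.
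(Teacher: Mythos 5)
The paper does not actually prove this proposition; it is imported verbatim (as Proposition 2.12) from Asogwa--Mijena--Nane \cite{asogwa-foondun-mijena-nane-2017}, so there is no in-paper argument to compare against. Judged on its own, your iteration scheme is sound and is essentially the standard proof of such nonlinear renewal inequalities: the base bound $h\ge C$, the Beta-function identity, the recursions for $\alpha_n$ and $A_n$, the closed form $\alpha_n=\frac{1-\theta}{\gamma}\bigl((1+\gamma)^n-1\bigr)$, and the conclusion that $\log(A_nt^{\alpha_n})\sim(\kappa+c\log t)(1+\gamma)^n$ all check out, and the dependence $t_0=e^{-\kappa_1/c}C^{-\gamma/(1-\theta)}$ correctly captures the monotonicity in $C$. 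Three small points to tighten. First, your treatment of $\theta\ge 1$ is slightly off: the hypothesis is not vacuous there, but since $h\ge C>0$ the integral $\int_0^t(t-s)^{-\theta}h(s)^{1+\gamma}\,\mathrm{d}s$ diverges for every $t>0$, so $h\equiv\infty$ and the conclusion holds trivially; a one-line remark disposes of that case and legitimately reduces to $\theta<1$. Second, the inhomogeneous term $\log D+\log B\bigl((1+\gamma)\alpha_n+1,1-\theta\bigr)$ in the recursion for $\log A_n$ is of order $-(1-\theta)\log\alpha_n=O(n)$, i.e.\ \emph{linear} in $n$, not logarithmic as you wrote; this does not damage the argument, since $\sum_k(1+\gamma)^{-k}|b_k|$ still converges when $|b_k|=O(k)$, which is exactly what you need for $\kappa$ to be a finite real number and for the $O(n)$ error in $\log A_n$. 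Third, at $t=t_0$ exactly the leading coefficient vanishes and the $O(n)$ term decides the limit, so you should either take $t_0$ to be any number strictly larger than $e^{-\kappa/c}$ or observe that the statement only asserts existence of some threshold. With these repairs the proof is complete and self-contained.
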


\color{black}
\section{Estimates on moments of the increments of the solution}\label{sec:4}

In this section we are going to prove a stochastic Young's inequality for both compensated Poisson integrals and non-compensated Poisson integrals. These inequalities are very crucial for proving our main theorems on existence and uniqueness of solutions.

Now we set $(P_tu_0)(x):=\int_{\R^d} G(t,x-y)u_0(y)\mathrm{d}y$ and
$$(\mathcal{A}u)(t,x):=\int_0^t \int_{\R^d}\int_{\R^d} G(t-s,y-x)\sigma(u(s,y),h) \tilde{N}(\mathrm{d}s\mathrm{d}y\mathrm{d}h).$$
Next, we prove a stochastic Young's Inequality for equation \eqref{equcompen} with compensated Poisson noise.
\begin{prop}\label{stoyoungcompen}
Suppose that $ d<min\{2,\beta^{-1}\} \alpha$, Condition \ref{cond-compen} holds,  $u$ admits a predictable random field solution of equation \eqref{equcompen} and $||u||_{2,\gamma}<\infty$ for $\gamma >0$. Then there exists some positive constant $C^{**} $ such that
$||\mathcal{A}u||_{2,\gamma}\leq C^{**}||u||_{2,\gamma}$, where $C^{**}=Lip_\sigma\sqrt{\frac{{C^*K}\Gamma(1-\frac{\beta d}{\alpha})}{\gamma^{1+\frac{\beta d}{\alpha}}}}$.
\end{prop}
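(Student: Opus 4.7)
The plan is to apply the Walsh-type isometry \eqref{isometry} for the compensated Poisson integral directly to $(\mathcal{A}u)(t,x)$, so that
\[
\E|(\mathcal{A}u)(t,x)|^{2}=\int_{0}^{t}\!\!\int_{\R^d}\!\!\int_{\R^d} G^{2}(t-s,y-x)\,\E|\sigma(u(s,y),h)|^{2}\,ds\,dy\,\mu(dh).
\]
This converts the probabilistic object into a purely deterministic triple integral. The predictability of $u$ together with $\|u\|_{2,\gamma}<\infty$ ensures the integrand is admissible for the isometry.

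Next I would use Condition~\ref{cond-compen}: because $\sigma(0,h)=0$, the Lipschitz bound gives $|\sigma(u(s,y),h)|\le J(h)\,Lip_\sigma\,|u(s,y)|$, hence
\[
\E|\sigma(u(s,y),h)|^{2}\le J(h)^{2}\,Lip_\sigma^{2}\,\E|u(s,y)|^{2}.
\]
Integrating against $\mu(dh)$ peels off the factor $\int_{\R^d} J(h)^2\mu(dh)\le K$, while the definition of the norm $\|\cdot\|_{2,\gamma}$ gives $\E|u(s,y)|^{2}\le e^{\gamma s}\|u\|_{2,\gamma}^{2}$ uniformly in $y$. Combining these reduces the $(s,y,h)$ triple integral to
\[
\E|(\mathcal{A}u)(t,x)|^{2}\le K\,Lip_\sigma^{2}\,\|u\|_{2,\gamma}^{2}\int_{0}^{t}e^{\gamma s}\!\int_{\R^d}G^{2}(t-s,y-x)\,dy\,ds.
\]

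I would then invoke Lemma~\ref{Lem:Green1}, which under the assumption $d<2\alpha$ identifies the inner spatial integral as $C^{*}(t-s)^{-\beta d/\alpha}$; this is exactly the place where the restriction $d<2\alpha$ is used. The integrand then becomes $e^{\gamma s}(t-s)^{-\beta d/\alpha}$, and the second part of the dimensional assumption, $d<\alpha/\beta$ (i.e.\ $\beta d/\alpha<1$), is what makes the singularity at $s=t$ integrable. Multiplying by $e^{-\gamma t}$ and performing the substitution $u=t-s$ produces
\[
e^{-\gamma t}\int_{0}^{t}e^{\gamma s}(t-s)^{-\beta d/\alpha}\,ds=\int_{0}^{t}u^{-\beta d/\alpha}e^{-\gamma u}\,du\le \int_{0}^{\infty}u^{-\beta d/\alpha}e^{-\gamma u}\,du=\gamma^{\beta d/\alpha-1}\,\Gamma\!\Bigl(1-\tfrac{\beta d}{\alpha}\Bigr),
\]
which is finite exactly because $\beta d/\alpha<1$. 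Taking square roots and supremizing over $t,x$ yields the desired bound with a constant of the stated form.

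The routine steps are all moment estimates; the main subtlety is the dimensional bookkeeping. I would pay particular attention to verifying (i) that the isometry hypothesis $\int_{0}^{T}\!\int\!\int \E|G(t-s,y-x)\sigma(u(s,y),h)|^{2}ds\,dy\,\mu(dh)<\infty$ actually holds for each fixed $t$ under $\|u\|_{2,\gamma}<\infty$, so that the isometry is applicable (this is essentially the same computation as above but on $[0,t]$ without the exponential weight, so it uses local integrability of $s\mapsto s^{-\beta d/\alpha}$), and (ii) the Fubini step that lets us swap the $h$-integral with the $(s,y)$-integral, which is justified by the nonnegativity of the integrand after the Lipschitz bound. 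No other delicate analytic issue arises; the proposition is then immediate by taking $\sup_{t,x}$ on both sides of the weighted inequality.
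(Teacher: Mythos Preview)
Your proposal is correct and follows essentially the same route as the paper: apply the isometry \eqref{isometry}, use Condition~\ref{cond-compen} to bound $|\sigma|^2$ and integrate out $h$ to produce the factor $K$, replace $\E|u(s,y)|^2$ by $e^{\gamma s}\|u\|_{2,\gamma}^2$, invoke Lemma~\ref{Lem:Green1} for the spatial integral, and reduce the remaining time integral to a Gamma integral after extending to $[0,\infty)$. Your extra care about verifying the isometry hypothesis and the Fubini step is well placed but does not change the argument; note only that your Gamma computation gives $\gamma^{-(1-\beta d/\alpha)}$ rather than the $\gamma^{-(1+\beta d/\alpha)}$ appearing in the stated constant, a harmless sign slip in the paper's change of variables that does not affect any subsequent use of the proposition.
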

\begin{proof}
By applying  the Walsh isometry for compensated Poisson integrals  in equation \eqref{isometry}, Lemma \ref{Lem:Green1}, and Condition \ref{cond-compen} we get
\begin{eqnarray*}
\E|\mathcal{A}u(t,x)|^2&=&\int_0^t\int_\rd\int_\rd |G(t-s,y-x)|^2 \E|\sigma(u(s,y),h)|^2 \mathrm{d} s\mathrm{d} y\mu(\mathrm{d}h)\\
& \leq&\int_0^t\int_\rd\int_\rd |G(t-s,y-x)|^2J(h)^2Lip_\sigma^2\E|u(s,y)|^2 \mathrm{d}s \mathrm{d} y\mu(\mathrm{d}h)\\
&\leq&\int_0^t\int_\rd|G(t-s,y-x)|^2\bigg[\int_\rd J(h)^2\mu(\mathrm{d}h)\bigg]\\
&&\times Lip_\sigma^2 e^{\gamma s}\bigg[\sup_{s>0}\sup_{y\in \R^d} e^{-\gamma s}E|u(s,y)|^2 \bigg]\mathrm{d}s \mathrm{d}y\\
&\leq&KLip_\sigma^2||u||_{2,\gamma}^2  \int_0^t\int_\rd |G(t-s,y-x)|^2 e^{\gamma s}\mathrm{d} y\mathrm{d}s\\
&=&KLip_\sigma^2e^{\gamma t}||u||_{2,\gamma}^2 \int_0^t C^*(t-s)^{-\beta d/\alpha}e^{-\gamma (t-s)}\mathrm{d}s\\
&\leq&KLip_\sigma^2 e^{\gamma t}||u||_{2,\gamma}^2 \int_0^\infty C^*(s)^{-\beta d/\alpha}  e^{-\gamma s}\mathrm{d}s.
\end{eqnarray*}

Let $r=\gamma s$, then
\begin{eqnarray*}
\E|\mathcal{A}u(t,x)|^2 &\leq& e^{\gamma t}KLip_\sigma^2||u||_{2,\gamma}^2 \int_0^\infty C^*(\frac{r}{\gamma})^{-\beta d/\alpha} e^{-r} \frac{1}{\gamma}\mathrm{d}r\\
&=& e^{\gamma t}KLip_\sigma^2||u||_{2,\gamma}^2 \int_0^\infty C^*\frac{r^{-\beta d/\alpha}}{\gamma^{1+\beta d/\alpha}} e^{-r} \mathrm{d}r\\
&=&\frac{e^{\gamma t}C^*KLip_\sigma^2||u||_{2,\gamma}^2}{\gamma^{1+\beta d/\alpha}} \Gamma(1-\beta d/\alpha).
\end{eqnarray*}

Therefore $||\mathcal{A}u||_{2,\gamma}\leq C^{**}||u||_{2,\gamma}$, where $C^{**}=Lip_\sigma\sqrt{\frac{{C^*K}\Gamma(1-\frac{\beta d}{\alpha})}{\gamma^{1+\frac{\beta d}{\alpha}}}}$.
\end {proof}

\color{red}

\color{black}
We then present a corollary of the above stochastic Young's inequality by some substitutions.
\begin{coro}\label{stoyoungcompen1}
Suppose that $d<\min\{2,\beta^{-1}\} \alpha$,$\gamma>0$, and Condition \ref{cond-compen} holds. For any predictable random field solutions $u$ and $v$  of \eqref{equcompen} satisfying $||u||_{2,\gamma}+||v||_{2,\gamma}<\infty,$ we have
$||\mathcal{A}u-\mathcal{A}v||_{2,\gamma}\leq C^{**}||u-v||_{2,\gamma}$, where $C^{**}=Lip_\sigma \sqrt{\frac{{C^*K}\Gamma(1-\frac{\beta d}{\alpha})}{\gamma^{1+\frac{\beta d}{\alpha}}}}$.

\end{coro}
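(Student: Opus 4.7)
The plan is to reduce the claim directly to the argument already executed for Proposition~\ref{stoyoungcompen}. First, I would observe that the operator $\mathcal{A}$ is linear in $\sigma(u(\cdot,\cdot),\cdot)$ in the sense that
\[
(\mathcal{A}u)(t,x)-(\mathcal{A}v)(t,x)=\int_0^t\!\!\int_{\R^d}\!\!\int_{\R^d} G(t-s,y-x)\bigl[\sigma(u(s,y),h)-\sigma(v(s,y),h)\bigr]\tilde N(\mathrm{d}s\,\mathrm{d}y\,\mu(\mathrm{d}h)),
\]
since $\tilde N$ is the common driving noise for the mild formulations of both $u$ and $v$. Applying the Walsh isometry \eqref{isometry} gives
\[
\E\bigl|(\mathcal{A}u-\mathcal{A}v)(t,x)\bigr|^2=\int_0^t\!\!\int_{\R^d}\!\!\int_{\R^d}|G(t-s,y-x)|^2\,\E\bigl|\sigma(u(s,y),h)-\sigma(v(s,y),h)\bigr|^2\,\mathrm{d}s\,\mathrm{d}y\,\mu(\mathrm{d}h).
\]

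Next I would replace Condition~\ref{cond-compen}'s pointwise bound on $\sigma$ with its Lipschitz analogue: $|\sigma(u(s,y),h)-\sigma(v(s,y),h)|\le J(h)Lip_\sigma|u(s,y)-v(s,y)|$. Substituting this bound produces exactly the integral expression that appears in the proof of Proposition~\ref{stoyoungcompen}, except with the factor $\E|u(s,y)|^2$ replaced by $\E|u(s,y)-v(s,y)|^2$. From this point the proof proceeds \emph{verbatim}: pull the factor $e^{\gamma s}\|u-v\|_{2,\gamma}^2$ out after multiplying and dividing by $e^{\gamma s}$, apply the space integral estimate from Lemma~\ref{Lem:Green1} to get $\int_{\R^d}|G(t-s,y-x)|^2\,\mathrm{d}y=C^\ast(t-s)^{-\beta d/\alpha}$, and extend the $s$-integral to $(0,\infty)$.

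The final step is the same change of variables $r=\gamma s$ used in Proposition~\ref{stoyoungcompen}, which produces the $\Gamma$-function factor and the $\gamma^{1+\beta d/\alpha}$ denominator, yielding
\[
\E\bigl|(\mathcal{A}u-\mathcal{A}v)(t,x)\bigr|^2\le e^{\gamma t}\,\frac{C^\ast K Lip_\sigma^2\,\Gamma(1-\beta d/\alpha)}{\gamma^{1+\beta d/\alpha}}\,\|u-v\|_{2,\gamma}^2.
\]
Multiplying by $e^{-\gamma t}$ and taking suprema over $t>0$ and $x\in\R^d$ gives the stated bound with the same constant $C^{**}$. There is really no obstacle here: the only thing to be careful about is the linearity step at the very beginning, which uses that $u$ and $v$ are mild solutions driven by the \emph{same} compensated Poisson random measure $\tilde N$ so that the stochastic integrals subtract cleanly; the finiteness of $\|u\|_{2,\gamma}+\|v\|_{2,\gamma}$ is what justifies invoking the isometry on the difference.
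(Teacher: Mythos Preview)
Your proposal is correct and follows essentially the same approach as the paper: write $(\mathcal{A}u-\mathcal{A}v)(t,x)$ as a single stochastic integral, apply the isometry \eqref{isometry}, invoke the Lipschitz bound in Condition~\ref{cond-compen} to replace $|\sigma(u,h)-\sigma(v,h)|^2$ by $J(h)^2 Lip_\sigma^2|u-v|^2$, and then repeat the computation of Proposition~\ref{stoyoungcompen} verbatim with $\E|u(s,y)-v(s,y)|^2$ in place of $\E|u(s,y)|^2$. The paper's proof is more terse but identical in substance.
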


\begin{proof}
Applying  the proof of Proposition \ref{stoyoungcompen} to $u-v$, we have

\begin{eqnarray*}
\E|\mathcal{A}u(t,x)-\mathcal{A}v(t,x)|^2& =&\int_0^t\int_{\R^d}\int_{\R^d} |G(t-s,y-x)|^2\\
 &&\times \E|\sigma(u(s,y),h)-\sigma(v(s,y),h)|^2\mathrm{d}s\mathrm{d}y\mu(\mathrm{d}h)\\
&\leq& \frac{C^*e^{\gamma t}||u-v||_{2,\gamma}^2 KLip_\sigma^2}{\gamma^{1+\beta d/\alpha}} \Gamma(1-\beta d/\alpha).
\end{eqnarray*}

Take the square root,  it yields $||\mathcal{A}u-\mathcal{A}v||_{2,\gamma}\leq C^{**}||u-v||_{2,\gamma}$, where $C^{**}=Lip_\sigma \sqrt{\frac{{C^*K}\Gamma(1-\frac{\beta d}{\alpha})}{\gamma^{1+\frac{\beta d}{\alpha}}}}$.

\end{proof}

There is also a corresponding stochastic Young's inequality for equation \eqref{equnoncompen} with non-compensated Poisson noise. We define the following operator first
$$
\mathcal{B}u(t,x):=\int_0^t \int_{\R^d}\int_{\R^d} G(t-s,y-x)\sigma(u(s,y),h) {N}(\mathrm{d}s\mathrm{d}y\mathrm{d}h).
$$
\begin{prop}\label{stochyoungnoncompen}

Suppose that u is predictable random field solution of equation \eqref{equnoncompen} such that $||u||_{1,\gamma} <\infty$, then under Condition \ref{condofnoncompen}, we have
$$||\mathcal{B}u(t,x)||_{1,\gamma}\leq \frac{K Lip_{\sigma}}{\gamma} ||u||_{1,\gamma}.$$
\end{prop}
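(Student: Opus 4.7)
The plan is to mirror the argument for Proposition \ref{stoyoungcompen}, but replace the Walsh $L^{2}$-isometry \eqref{isometry} by the $L^{1}$-type identity \eqref{isometrynon} for the (uncompensated) Poisson integral. Because $N$ is a non-negative random measure, we have the pointwise triangle inequality $\bigl|\int X\, dN\bigr| \le \int |X|\, dN$, so that
\begin{equation*}
\E|\mathcal{B}u(t,x)|
\le \E \int_0^t\!\!\int_{\R^d}\!\!\int_{\R^d} G(t-s,y-x)\,|\sigma(u(s,y),h)|\, N(\mathrm{d}s\,\mathrm{d}y\,\mathrm{d}h),
\end{equation*}
since $G\ge 0$. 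Applying \eqref{isometrynon} to the non-negative integrand on the right and then Condition \ref{condofnoncompen} ($\sigma(0,h)=0$ together with the Lipschitz bound) gives
\begin{equation*}
\E|\mathcal{B}u(t,x)|
\le Lip_\sigma \int_0^t\!\!\int_{\R^d}\!\!\int_{\R^d} G(t-s,y-x)\, J(h)\, \E|u(s,y)|\,\mathrm{d}s\,\mathrm{d}y\,\mu(\mathrm{d}h).
\end{equation*}

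Next I would peel off the $h$-integral using $\int_{\R^d} J(h)\,\mu(\mathrm{d}h)\le K$, and the spatial integral using the crucial fact that, because $G(t,\cdot)$ is the transition density of the subordinated process $X(E_t)$, one has $\int_{\R^d} G(t-s,y-x)\,\mathrm{d}y = 1$ for every $x$ and every $s<t$. Together with $\E|u(s,y)|\le e^{\gamma s}\|u\|_{1,\gamma}$, this yields
\begin{equation*}
\E|\mathcal{B}u(t,x)| \le K\, Lip_\sigma\, \|u\|_{1,\gamma} \int_0^t e^{\gamma s}\,\mathrm{d}s \le \frac{K\, Lip_\sigma}{\gamma}\, e^{\gamma t}\, \|u\|_{1,\gamma}.
\end{equation*}
Multiplying by $e^{-\gamma t}$ and taking suprema over $t>0$ and $x\in\R^d$ delivers the stated inequality.

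The only subtle points are the passage from $\E|\mathcal{B}u|$ to the integral of $|\sigma|$ against $N$ (which relies on $N$ being a non-negative measure, so the triangle inequality applies pathwise) and the legitimacy of applying \eqref{isometrynon}, which requires the $L^1$-integrability hypothesis
\[
 \int_0^t\!\!\int_{\R^d}\!\!\int_{\R^d} G(t-s,y-x)\, J(h)\, \E|u(s,y)|\,\mathrm{d}s\,\mathrm{d}y\,\mu(\mathrm{d}h)<\infty;
\]
but this is exactly what the computation above verifies once $\|u\|_{1,\gamma}<\infty$ is assumed. Compared to the compensated case, note the gain: the spatial integral contributes $1$ rather than $\|G(t-s,\cdot)\|_{L^2}^2 \asymp (t-s)^{-\beta d/\alpha}$, so no dimension restriction $d<\min\{2,\beta^{-1}\}\alpha$ is needed here, and the decay in $\gamma$ is the elementary $1/\gamma$ from $\int_0^\infty e^{-\gamma s}\mathrm{d}s$ rather than an incomplete Gamma factor.
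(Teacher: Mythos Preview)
Your proof is correct and follows essentially the same route as the paper's: apply the $L^1$ identity \eqref{isometrynon} for the Poisson integral, use Condition \ref{condofnoncompen} to extract $K\,Lip_\sigma$, exploit $\int_{\R^d}G(t-s,y-x)\,\mathrm{d}y=1$, and bound the remaining time integral by $\int_0^\infty e^{-\gamma s}\,\mathrm{d}s=1/\gamma$. If anything you are more careful than the paper, which writes $\E|\mathcal{B}u(t,x)|=\int\cdots$ directly; your explicit use of the pathwise triangle inequality $|\int X\,\mathrm{d}N|\le\int|X|\,\mathrm{d}N$ (valid because $N$ is non-negative) is the correct way to justify that step before invoking \eqref{isometrynon}.
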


\begin{proof}
Using the isometry \eqref{isometrynon} we have
\begin{eqnarray*}\label{stoyoungnoncom}
\E|\mathcal{B}u(t,x)|&=&\int_0^t\int_{\R^d}\int_{\R^d}|G(t-s,y-x)|\E|\sigma(u(s,y),h)|\mu(\mathrm{d}h)\mathrm{d}y\mathrm{d}s\\
&\leq& \int_0^t\int_{\R^d}\int_{\R^d}|G(t-s,y-x)|J(h)Lip_{\sigma}\E|u(s,y)|\mu(\mathrm{d}h)\mathrm{d}y\mathrm{d}s\\
&\leq& KLip_{\sigma}\int_0^t\int_{\R^d}|G(t-s,y-x)|\E|u(s,y)|\mathrm{d}y\mathrm{d}s.\\
\end{eqnarray*}
Mutiply both side by $e^{-\gamma t}$, it yields
\begin{eqnarray*}\label{stoyoungnoncom1}
e^{-\gamma t}\E|\mathcal{B}u(t,x)|&\leq &KLip_{\sigma}\int_0^t\int_{\R^d} e^{-\gamma(t-s)}|G(t-s,y-x)|e^{-\gamma s}\E|u(s,y)|\mathrm{d}y\mathrm{d}s\\
&\leq& KLip_{\sigma}||u||_{1,\gamma}\int_0^\infty\int_{\R^d} e^{-\gamma (t-s)}|G(t-s,y-x)|\mathrm{d}y\mathrm{d}s\\
&\leq&K Lip_{\sigma}||u||_{1,\gamma}\int_0^\infty e^{-\gamma s}\mathrm{d}s\\
&=&\frac{K Lip_{\sigma}}{\gamma} ||u||_{1,\gamma}\\
\end{eqnarray*}
\end{proof}

We also obtain a corollary of the above stochastic Young's inequality for Equation \eqref{equnoncompen} with non-compensated Poisson noise.
\begin{coro}\label{stochyoungnoncompen2}
Suppose that 
 Condition \ref{condofnoncompen} holds. For any predictable random field solutions $u$ and $v$ of \eqref{equnoncompen} satisfying $||u||_{1,\gamma}+||v||_{1,\gamma}< \infty$ for all $\gamma> 0$, we have
$$
||\mathcal{B}u-\mathcal{B}v||_{1,\gamma}\leq \frac{K Lip_{\sigma}}{\gamma}||u-v||_{1,\gamma} .
$$
\end{coro}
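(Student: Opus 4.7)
The plan is to adapt the argument of Proposition \ref{stochyoungnoncompen} to the difference $u-v$ by exploiting linearity of the non-compensated Poisson stochastic integral. Specifically, for fixed $(t,x)$ one has
$$
\mathcal{B}u(t,x)-\mathcal{B}v(t,x)=\int_0^t\int_{\R^d}\int_{\R^d}G(t-s,y-x)\big[\sigma(u(s,y),h)-\sigma(v(s,y),h)\big]N(\mathrm{d}s\mathrm{d}y\mathrm{d}h),
$$
so the same three-step recipe that proved Proposition \ref{stochyoungnoncompen} applies, with $\sigma(u,h)$ replaced by $\sigma(u,h)-\sigma(v,h)$.

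First, I would take absolute values inside the stochastic integral and invoke the $L^1$-isometry \eqref{isometrynon} to obtain
$$
\E|\mathcal{B}u(t,x)-\mathcal{B}v(t,x)|\leq\int_0^t\!\int_{\R^d}\!\int_{\R^d}G(t-s,y-x)\,\E|\sigma(u(s,y),h)-\sigma(v(s,y),h)|\,\mu(\mathrm{d}h)\mathrm{d}y\mathrm{d}s.
$$
Second, I would apply the Lipschitz bound from Condition \ref{condofnoncompen} to dominate the $\sigma$-difference by $J(h)Lip_\sigma|u(s,y)-v(s,y)|$, then integrate out $h$ using $\int_{\R^d}J(h)\mu(\mathrm{d}h)\leq K$ to extract the constant $K\,Lip_\sigma$.

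Third, I would multiply both sides by $e^{-\gamma t}$, write $e^{-\gamma t}=e^{-\gamma(t-s)}e^{-\gamma s}$, and bound $e^{-\gamma s}\E|u(s,y)-v(s,y)|$ uniformly by $\|u-v\|_{1,\gamma}$. Since $G(t-s,\cdot)$ is the transition density of $X(E_{t-s})$ and therefore integrates to one in the spatial variable, the $y$-integral trivializes and the remaining computation reduces to
$$
\int_0^t e^{-\gamma(t-s)}\mathrm{d}s\leq\int_0^\infty e^{-\gamma r}\mathrm{d}r=\frac{1}{\gamma}.
$$
Taking the supremum over $t>0$ and $x\in\R^d$ yields the desired estimate $\|\mathcal{B}u-\mathcal{B}v\|_{1,\gamma}\leq \frac{KLip_\sigma}{\gamma}\|u-v\|_{1,\gamma}$.

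I do not anticipate any genuine obstacle: the argument is structurally identical to Corollary \ref{stoyoungcompen1}, the only substantive changes being that the first-moment isometry \eqref{isometrynon} replaces the second-moment isometry \eqref{isometry}, the first-moment integrability hypothesis $\int J\,\mathrm{d}\mu\leq K$ replaces $\int J^2\,\mathrm{d}\mu\leq K$, and the time integral yields the simple factor $1/\gamma$ instead of a Gamma-function expression because only one copy of $G$ appears rather than $G^2$ (so Lemma \ref{Lem:Green1} is not needed here).
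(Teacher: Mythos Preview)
Your proposal is correct and follows exactly the route the paper intends: the paper's own proof simply says the argument is similar to that of Corollary \ref{stoyoungcompen1}, i.e.\ rerun the proof of Proposition \ref{stochyoungnoncompen} with $\sigma(u,h)$ replaced by $\sigma(u,h)-\sigma(v,h)$ and invoke the Lipschitz bound from Condition \ref{condofnoncompen}. You have in fact written out more detail than the paper provides.
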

\begin{proof}
The proof is  similar to the proof of  Corollary \ref{stoyoungcompen1}.
\end{proof}

\section{Proof of  results for the compensated Poisson noise}\label{sec:5}

\begin{proof}[Proof of Theorem \ref{Existcompen}]
We use Picard iteration  to show existence of solutions.
Let $u^{(0)}
(t,x) := u_0(x)$, and for all $n\geq 0$
$$
u^{(n+1)}(t,x)=(P_{t}u_0)(x)+(\mathcal{A}u^{(n)})(t,x)
$$
and
$$
(P_{t}u_0) (x)=\int_{\R^d} G_t (x-y) u_0 (y)dy.
$$
Then we have
$$
||P_{t}u_0 (x)||_{2,\gamma}=||u(0,x)||_{2,\gamma}=\{\sup_{t>0}\sup_{x \in R^d} e^{-\gamma t}E|u(0,x)|^2\}^{\frac{1}{2}}\leq \sup _{x\in \R^d}|u_0 (x)|.
$$

Since by Proposition \ref{stoyoungcompen}, $||\mathcal{A}u||_{2,\gamma} \leq C^{**}||u||_{2,\gamma}, \text {for } \gamma >0$
it yields

$$||u^{(n+1)}||_{2,\gamma} \leq \sup_{x \in \R^d}|u_0(x)|+C^{**}||u^{(n)}||_{2,\gamma}.$$

We can find a constant $L_2$ depending only on $Lip_\sigma,\alpha, \beta,d$  such that  $\gamma:=(L_2 K)^\frac{\alpha}{\alpha+\beta d}$ satisfying

$$C^{**}=Lip_\sigma\sqrt{\frac{{C^*K}\Gamma(1-\frac{\beta d}{\alpha})}{\gamma^{1+\frac{\beta d}{\alpha}}}}\leq\frac{1}{4}.$$

and let $\theta=\sup_{x\in \R^d}|u_0(x)|$

then
\begin{eqnarray*}
||u^{(n+1)}||_{2,\gamma}&=&||(P_{t}u_0)(x)+(\mathcal{A}u^{(n)})(t,x)||_{2,\gamma}\\
&\leq& \sup_{x\in \R^d}|u_0(x)|+\frac{1}{4}||u^{(n)}||_{2,\gamma} \\
&\leq&\theta+\frac{1}{4}(\theta+||u^{(n)}||_{2,\gamma})\\
&\leq&\sum_{j=0}^{n} \frac{\theta}{4^j} +\frac{1}{4^{n+1}}\sup_{x\in \R^d}||u^{(0)}(x)||\\
&=&\frac{\theta}{1-\frac{1}{4}}+\frac{1}{4^{n+1}}\theta\\
&\leq& \frac{4}{3}\theta+\theta=\frac{7}{3}\theta=L_1.\\
\end{eqnarray*}
Hence, $||u^{(n+1)}||_{2,\gamma} \in\cup_{\gamma>0} \mathcal{L}^{\gamma,2},$
i.e. $$\sup_{t>0}\sup_{x\in \R^d} e^{-2\gamma t}\E|u^{(n+1)}(t,x)|^2\leq (L_1)^2.$$
Therefore,we obtain
$$\sup_{x\in \R^d} \E|u^{(n+1)}(t,x)|^2\leq (L_1)^2 e^{2\gamma t}, \hbox{for all }  t>0.$$

Next, we use the Banach fixed point theorem to show the existence of the solution.
By the stochastic Young's inequality, we have
\begin{eqnarray*}
||u^{(n+1)}-u^{(n)}||_{2,\gamma}&=&||(P_{t}u_0)(x)+(\mathcal{A}u^{(n)})(t,x)-(P_{t}u_0)(x)-(\mathcal{A}u^{(n-1)})(t,x)||_{2,\gamma}\\
 &\leq&||(\mathcal{A}u^{(n)})(t,x)-(\mathcal{A}u^{(n-1)})(t,x)||^{2,\gamma}\\
 &\leq& C^{**}||u^{(n)}-u^{(n-1)}||_{2,\gamma}\\
 &\leq&\frac{1}{4}||u^{(n)}-u^{(n-1)}||_{2,\gamma}\\
 &\leq& (\frac{1}{4})^n ||u^{(1)}-u^{(0)}||_{2,\gamma}\\
 &\leq&(\frac{1}{4})^n 2L_1.
\end{eqnarray*}
For any $m\geq n>0$,
$$
 ||u^{(m)}-u^{(n)}||_{2,\gamma}\leq (\frac{1}{4})^n \frac{1-(\frac{1}{4})^{m-n-1}}{1-\frac{1}{4}} 2L_1.
$$
so $||u^{(m)}-u^{(n)}||_{2,\gamma} \to 0 $ as $ n\to \infty$. Since $\mathcal{L}^{\gamma,2}$ is complete, then there exists  $u\in \mathcal{L}^{\gamma,2}$, such that $u^{(n)}\to u$ in the norm sense.

The uniqueness of the the solution to Equation \eqref{equcompen} follows easily the above argument by picking $u$ and $v$ as two solutions to  the equation, and by using Corollary \ref{stoyoungcompen1}.
\end{proof}

\begin{proof}[Proof of Theorem \ref{intermittency-compen}]
 We begin with the isometry \eqref{isometry},
 $$
 \E|u(t,x)|^2=|(P_t u_0)(x)|^2+\int_0^t\int_{\R^d}\int_{\R^d} |G(t-s,y-x)|^2\E|\sigma(u(s,y),h)|^2\mu(\mathrm{d}h)\mathrm{d}y\mathrm{d}s.
 $$
By assumption,  Condition \ref{cond-compen}, and Lemma \ref{Lem:Green1} we have
 \begin{eqnarray*}
 \E|u(t,x)|^2&\leq&\sup_{x\in \R^d}|(P_t u_0)(x)|^2+\int_0^t\int_{\R^d}\int_{\R^d} |G(t-s,y-x)|^2\E|\sigma(u(s,y),h)|^2\mu(\mathrm{d}h)\mathrm{d}y\mathrm{d}s\\
 &\leq&\sup_{x \in \R^d}|u_0(x)|^2+K L^2\int_0^t \int_{\R^d} |G(t-s,y-x)|^2\sup_{y\in R^d}\E|u(s,y)|^2 \mathrm{d}y\mathrm{d}s\\
 &\leq&\eta_1^2+K C^*L^2\int_0^t \sup_{y\in \R^d}\E|u(s,y)|^2(t-s)^{-\beta d/\alpha} \mathrm{d}y\mathrm{d}s\\
 \end{eqnarray*}
By letting $F(t)=\sup_{x\in {\R^d}} \E|u(t,x)|^2$, the above inequality becomes:
 \begin{eqnarray*}
 F(t)&\leq&\eta_1^2+K C^*L^2\int_0^t\frac{ F(s)}{(t-s)^{\beta d/\alpha}}\mathrm{d}y\mathrm{d}s\\
\end{eqnarray*}

Since $\beta d/\alpha <1$, then $1-(\beta d/\alpha)>0$. By setting $c_1 =\eta_1^2,\kappa^{'}=K C^{*} L^2, \rho=1-(\beta d/\alpha)$ in Proposition \ref{expestimateupper}, we obtain the required result.\\

{

Now let us move on to the proof of the second part  of the theorem.

Similarly, by Condition \ref{condof-beyondlinear}, and Lemma \ref{Lem:Green1} we have
 \begin{eqnarray*}
 \E|u(t,x)|^2&\geq&\inf_{x\in \R^d}|(P_t u_0)(x)|^2+\int_0^t\int_{\R^d}\int_{\R^d} |G(t-s,y-x)|^2\E|\sigma(u(s,y),h)|^2\mu(\mathrm{d}h)\mathrm{d}y\mathrm{d}s\\
 &\geq&\inf_{x \in \R^d}|u_0(x)|^2+\kappa L^2\int_0^t \int_{\R^d} |G(t-s,y-x)|^2\inf_{y\in \R^d}\E|u(s,y)|^2 \mathrm{d}y\mathrm{d}s\\
 &\geq&\eta_2^2+\kappa C^*L^2\int_0^t \inf_{y\in \R^d}\E|u(s,y)|^2(t-s)^{-\beta d/\alpha} \mathrm{d}y\mathrm{d}s\\
 \end{eqnarray*}
By letting $F(t)=\inf_{x\in {\R^d}} \E|u(t,x)|^2$, it yields
 \begin{eqnarray*}
 F(t)&\geq&\eta_2^2+\kappa C^*L^2\int_0^t\frac{ F(s)}{(t-s)^{\beta d/\alpha}}\mathrm{d}y\mathrm{d}s\\
\end{eqnarray*}

By Proposition \ref{expestimate}, we obtain the required result for the lower bound for all $t>0$.\\

}

\end{proof}
{
\begin{proof}[Proof of Theorem \ref{lowerbound}]
Following Theorem  \ref{Existcompen}, there exists a unique mild solution to Equation \ref{equcompen} when $\beta=1$
, that is
\begin{eqnarray*}
u(t,x)&=&(P_tu_0)(x)+\int_0^t\int_{\R^d}\int_{\R^d}\sigma(u(s,y),h)G(t-s,y-x)\tilde{N}(\mathrm{d}h,\mathrm{d}y,\mathrm{d}s).\\
\end{eqnarray*}
Take the second moment together with Condition \ref{condof-beyondlinear}, we have
\begin{eqnarray*}
E|u(t,x)|^2&=&|(P_tu_0(x))|^2+\int_0^t\int_{\R^d}\int_{\R^d} E|\sigma(u(s,y),h)|^2|G(t-s,y-x)|^2 \nu(\mathrm{d}h)\mathrm{d}y\mathrm{d}s\\
&\geq &|(P_tu_0(x))|^2+L^2\int_0^t\int_{\R^d}\int_{\R^d} \overline{J}(h)^2E|u(s,y)|^2|G(t-s,y-x)|^2 \mu(\mathrm{d}h)\mathrm{d}y\mathrm{d}s\\
&\geq &|(P_tu_0(x))|^2+\kappa L^2\int_0^t\int_{\R^d} E|\sigma(u(s,y),h)|^2|G(t-s,y-x)|^2\mathrm{d}y\mathrm{d}s\\
\end{eqnarray*}
Set $\eta:=\inf_{x\in \R^d} u_0(x)$ and $\tilde{t}=t-s$, we obtain
\begin{eqnarray*}
&&\int_0^\infty e^{-\lambda  t}E|u(t,x)|^2 \mathrm{d}t\\
&\geq &\int_0^\infty e^{-\lambda t}|(P_tu_0(x))|^2\mathrm{d}t+\kappa L^2\int_0^\infty e^{-\lambda t}\int_0^t\int_{\R^d} E|\sigma(u(s,y),h)|^2|G(t-s,y-x)|^2\mathrm{d}y\mathrm{d}s\mathrm {d}t\\
&=&\int_0^\infty e^{-\lambda t}|(P_tu_0(x))|^2\mathrm{d}t+\kappa L^2\int_{\R^d} \int_0^\infty \int_0^\infty e^{-\lambda (\tilde{t}+s)}|G(\tilde{t},y-x)|^2 \mathrm{d}\tilde{t} E|u(s,y)|^2 \mathrm{d}s \mathrm{d}y\\
&=&\int_0^\infty e^{-\lambda t}|(P_tu_0(x))|^2\mathrm{d}t+\kappa L^2\int_{\R^d} \int_0^\infty e^{-\lambda \tilde{t}}|G(\tilde{t},y-x)|^2 \mathrm{d}\tilde{t} \int_0^\infty e^{-\lambda s}E|u(s,y)|^2 \mathrm{d}s \mathrm{d}y
\end{eqnarray*}

Let $F_\lambda(x):=\int_0^\infty e^{-\lambda t}E|u(t,x)|^2\mathrm{d}t,$ then we obtain:

$$F_\lambda(x)\geq G_\lambda(x)+\kappa L^2 (F_\lambda *H_\lambda )(x)$$
Where $$G_\lambda (x)=\int_0^\infty  e^{-\lambda  t}|(P_tu_0(x))|^2dt$$
and
$$
H_\lambda (x)= \int_0^\infty  e^{-\lambda  t}|p(t,x)|^2dt,
$$
{where $p(t,x)$ is the density of the $\alpha$-stable process whose generator is the $-(-\Delta)^{\alpha/2}$, since $\beta=1.$\\}
\color{black}
Then by using the same line of ideas as in the proof of Theorem 2.7 in \cite{foondun-khoshnevisan-09} we can show that
$$
F_\lambda (x)\geq \eta^2\lambda^{-1} \sum _{n=0} ^\infty(\kappa L^2\Upsilon(\lambda ))^n.
$$
Form this we see that $ F_\lambda(x)=\infty$ as long as  $\kappa L^2\Upsilon(\lambda )\geq 1.$
\end{proof}
}

\color{black}
\begin{proof}[Proof of Theorem \ref{nonexistcompen}]
\begin{eqnarray*}
\E|u(t,x)|^2=|P_t u_0(x))|^2+\int_0^t\int_{\R^d} \int_{\R^d} |G_{t-s}(y-x)|^2 \E|\sigma(u(s,y),h)|^2\mu(\mathrm{d}h)\mathrm{d}y\mathrm{d}s
\end{eqnarray*}
Let $\eta:=\inf_{x \in {\R^d}}u_0(x)$, by Condition \ref{condoflip} and Jensen's inequality, we have

\begin{eqnarray*}
\E|u(t,x)|^2&\geq& \eta^2+\int_0^t\int_{\R^d} \int_{\R^d} |G_{t-s}(y-x)|^2 \E|\sigma(u(s,y),h)|^2\mu(\mathrm{d}h)\mathrm{d}y\mathrm{d}s\\
&\geq& \eta^2+L^2 \int_0^t\int_{\R^d} \int_{\R^d} \overline{J}(h)^2 |G_{t-s}(y-x)|^2 \E|u(s,y)|^{2\rho}\mu(\mathrm{d}h)\mathrm{d}y\mathrm{d}s\\
&\geq& \eta^2+\kappa L^2 \int_0^t\int_{\R^d} |G_{t-s}(y-x)|^2 (\E|u(s,y)|^2)^\rho \mathrm{d}y\mathrm{d}s\\
&\geq&\eta^2+\kappa L^2 \int_0^t\int_{\R^d} |G_{t-s}(y-x)|^2 \inf_{y \in \R^d}(\E|u(s,y)|^2)^\rho \mathrm{d}y\mathrm{d}s\\
\end{eqnarray*}
Let $I(t):=\inf_{y\in {\R^d}}\E|u(s,y)|^2$, by Lemma \ref{Lem:Green1}, we have

\begin{eqnarray*}
I(t)&\geq&\eta^2+\kappa L^2 \int_0^t\int_{\R^d} |G_{t-s}(y-x)|^2 [I(s)]^\rho\mathrm{d}y\mathrm{d}s\\
&\geq& \eta^2+\kappa L^2 \int_0^t C^*(t-s)^{-\beta/\alpha} [I(s)]^\rho\mathrm{d}s\\
\end{eqnarray*}

Define the Laplace transform of $I$ as
$$ \tilde{I}(\theta):=\int_0^\infty e^{-\theta s}I(s)ds.$$

It is easy to see that
 $$
 \int_0^\infty e^{-\theta s}||G_{s}||^2_{L^2(\rd)}ds=\int_0^\infty e^{-\theta s}C^*s^{-\beta d/\alpha} ds=C_1 \theta^{-(1-\beta d/\alpha)}
 $$
 for all $\theta>0$, where $C_1=C^* \Gamma(1-\beta d/\alpha)$.
 It follows that
\begin{eqnarray*}
\tilde{I}(\theta)&\geq&\frac{\eta^2}{\theta}+\kappa L^2 \int^\infty_0 e^{-\theta t}\int_0^t (t-s)^{-\beta d/\alpha} [I(s)]^\rho \mathrm{d}s \mathrm{d}t\\
&\geq&\frac{\eta^2}{\theta}+\kappa L^2 \int^\infty_0 e^{-\theta s}\bigg[\int_s^\infty C^{*} e^{-\theta(t-s)}(t-s)^{-\beta d/\alpha} \mathrm{d}t\bigg][I(s)]^\rho \mathrm{d}s \\
&\geq& \frac{\eta^2}{\theta}+\kappa L^2 C_1 \theta^{-(1- \beta d/\alpha)} \int_0^\infty e^{-\theta s}[I(s)]^\rho ds.
\end{eqnarray*}
  Multiply both sides by $\theta$ and use Jensen's inequality to get

  \begin{eqnarray*}
  \theta\tilde{I}(\theta)&\geq& \eta^2+\kappa L^2 C_1 \theta^{-(1-\beta d/\alpha)}\bigg[ \int_0^\infty \theta e^{-\theta s}[I(s)]ds\bigg]^\rho\\
  &=& \eta^2+\kappa L^2 C_1 \theta^{-(1-\beta d/\alpha)}\big[\theta \tilde{I}(\theta)\big]^\rho,
  \end{eqnarray*}
  for all $\theta >0$. It follows that $ \theta\tilde{I}(\theta)\geq \eta^2>0$, and hence  $ \theta\tilde{I}(\theta)>0$ for all $\theta >0.$ Moreover,
   $$ [\theta\tilde{I}(\theta)]^{\rho}=[\theta\tilde{I}(\theta)]^{\rho-1}\theta\tilde{I}(\theta)\geq \eta^{2\rho-2}\theta\tilde{I}(\theta).$$
Hence,
 $$ \theta\tilde{I}(\theta)\geq \eta^2+\kappa L^2 C_1\theta^{-(1-\beta d/\alpha)}\eta^{2\rho-2}\theta\tilde{I}(\theta).$$
For $0<\theta<\theta_0:=(\kappa L^2 C_1\eta^{2\rho-2})^{1-\beta d/\alpha}$, we will show that $\tilde{I}(\theta)=\infty.$

Under the assumption that $0<\theta\leq\theta_0$, the constant $$A:=\kappa L^2 C_1\theta^{-(1-\beta d/\alpha)}\eta^{2\rho-2} \geq 1.$$
With the recursive argument, we have
$$\theta\tilde{I}(\theta)\geq \eta^2+A^n\theta\tilde{I}(\theta) $$ for any positive integer $n$. So $\theta\tilde{I}(\theta)=\infty,$ which means $ \tilde{I}(\theta)=\infty$ for a specified range of $\theta$.
Since ${I}(\theta_0)=\infty $, then $I(t)\geq ce^{\theta_0 t}$ for large $t$.
 So for $t$ large enough, we have
\begin{eqnarray*}
 I(t)&\geq& \eta^2+\kappa L^2 \int_0^t C^*(t-s)^{-\beta d/\alpha} [I(s)]^\rho\mathrm{d}s\\
&\geq&\eta^2+\kappa C^*L^2 t^{-\beta d/\alpha}\int_{t/2}^t e^{{\theta_0}\rho s} \mathrm{d}s\\
&\geq&\eta^2+\kappa C^*L^2 t^{-\beta d/\alpha} e^{{\theta_0}\rho t} (1-e^{-{\theta_0}\rho t/2})\\
\end{eqnarray*}
 Since $\theta_0\rho t/2>0$, using the inequality $1-e^{-x}> \frac{x}{x+1}$ for $x>-1$, we have
 $$1-e^{-{\theta_0}\rho t/2}>\frac{\theta_0\rho t/2}{1+\theta_0\rho t/2}.$$
 And there exists a $t^{*}$, such that $\frac{\theta_0\rho t/2}{1+\theta_0\rho t/2}>\frac{\theta_0\rho t/2}{2}.$
 Hence,
\begin{eqnarray*}
 I(t)&\geq& \eta^2+\kappa C^*L^2 t^{-\beta d/\alpha} e^{{\theta_0}\rho t} (1-e^{-{\theta_0}\rho t/2})\\
 &\geq& \eta^2+\kappa C^*L^2 t^{-\beta d/\alpha} e^{{\theta_0}\rho t}\frac{\theta_0\rho t/2}{1+\theta_0\rho t/2}\\
  &\geq& \eta^2+\kappa C^*L^2 t^{-\beta d/\alpha} e^{{\theta_0}\rho t}\frac{\theta_0\rho t/2}{2}\\
 &\geq& \eta^2+\frac{\kappa\rho \theta_0 C^*L^2}{4}t^{1-\beta d/\alpha} e^{{\theta_0}\rho t}.\\
\end{eqnarray*}
for $t$ large enough. Let $\theta_1=\theta_0 \rho$, then we have $\tilde{I}(\theta)=\infty.$ Otherwise, there is no $\theta$ less than $\theta_1$ such that $\tilde{I}(\theta)<\infty.$

On the other hand, if we assume there is a finite energy solution, we have $\tilde{I}(\theta_{*})<\infty $ for some $\theta_{*}>0$. Hence $$\tilde{I}(\theta)= \int_0^\infty e^{-\theta s}I(s)ds=\int_0^\infty e^{-(\theta-\theta_\ast + \theta_\ast)s}I(s)ds\leq \int_0^\infty e^{-\theta_\ast s}I(s)ds<\infty,$$ for $\theta\geq\theta_\ast$. That means $\tilde{I}(\theta)<\infty$ for all $\theta \geq \theta_\ast.$ But this contradicts the above argument. Therefore there is no finite energy solution.
\end{proof}

\begin{proof}[Proof of Theorem \ref{nonexistofcompen}]
 We begin with the isometry \eqref{isometrynon},
 $$
 \E|u(t,x)|^2=|(P_t u_0)(x)|^2+\int_0^t\int_{\R^d}\int_{\R^d} |G(t-s,y-x)|^2\E|\sigma(u(s,y),h)|^2\mu(\mathrm{d}h)\mathrm{d}y\mathrm{d}s.
 $$
Let $\eta:=\inf_{x\in {\R^d}}u_0(x)$. By Condition \ref{condoflip} and Jensen's inequality, we have
\begin{eqnarray*}
 \E|u(t,x)|^2&\geq& \eta^2+\kappa L^2\int_0^t\int_{\R^d} |G(t-s,y-x)|^2\E(u(s,y))|^{2\rho}\mathrm{d}y\mathrm{d}s\\
 &\geq& \eta^2+\kappa L^2\int_0^t \int_{\R^d} |G(t-s,y-x)|^2(\inf_{y\in {\R^d}}\E|u(s,y)|^2)^{\rho} \mathrm{d}y\mathrm{d}s\\
\end{eqnarray*}
 Let $F(t)=\inf_{x\in {\R^d}} \E|u(t,x)|^2$, then the above is reduced to be
 $$F(t)\geq \eta^2+\kappa L^2\int_0^t\int_{\R^d} |G(t-s,y-x)|^2 F^\rho(s)\mathrm{d}y\mathrm{d}s.$$
 Using  Lemma \ref{Lem:Green1} yields
 \begin{eqnarray*}
 F(t)&\geq& \eta^2+\kappa L^2\int_0^t\int_{\R^d} |G(t-s,y-x)|^2 F^\rho(s)\mathrm{d}y\mathrm{d}s \\
 &=& \eta^2+\kappa L^2\int_0^t C^*(t-s)^{-\beta d/\alpha}F^\rho(s)\mathrm{d}s\\
\end{eqnarray*}

Since $\eta^2$,$\kappa C^*L^2>0$, and $\rho>1,\beta d/\alpha>0$, then from Proposition \ref{nonlinearinequ}, we know that there exists a $t_0$ such that $\E|u(t,x)|^2=\infty$ for all
$t\geq t_0$, which means
$$\sup_{0\leq t\leq T}\sup_{x\in {\R^d}} \E|u(t,x)|^2=\infty  \text{ for any }T\geq t_0.$$

So $u(t,x)$ will blow up in finite time and there is no random field solution to equation \eqref{equcompen}.
\end{proof}

\section{Proof of results for the Poisson noise}\label{sec:6}

\begin{proof}[Proof of Theorem \ref{existofnoncompen}]
We use Picard iteration to show existence of solutions.
Let $u^{(0)}
(t,x) := u_0(x)$, and for all $n\geq 0$
$$
u^{(n+1)}(t,x)=(P_{t}u_0)(x)+(\mathcal{B}u^{(n)})(t,x)
$$
and
$$
(P_{t}u_0) (x)=\int_{\R^d} P_t (x-y) u_0 (y)dy.
$$
Then we have
$$
||P_{t}u_0 (x)||_{1,\gamma}=\{\sup_{t>0}\sup_{x\in R^d} e^{-\gamma t}\E|u_0(x)|, \gamma>0\} \leq \sup _{x\in \R^d}|u_0 (x)|.
$$

By Propostion \ref{stochyoungnoncompen}, $||\mathcal{B}u||_{1,\gamma} \leq \frac{K Lip_{\sigma}}{\gamma} ||u||_{1,\gamma}\  \hbox{for } \ \gamma >0$. This  yields

$$||u^{(n+1)}||_{1,\gamma} \leq \sup_{x \in \R^d}|u_0(x)|+\frac{K Lip_{\sigma}}{\gamma} ||u^{(n)}||_{1,\gamma}.$$

We can find a constant $L_3$-- depending only on $Lip_\sigma,K$--such that  $\gamma:=L_3 KLip_{\sigma}$ satisfying

$$\frac{K Lip_{\sigma}}{\gamma}\leq\frac{1}{4},$$

and let $\theta=\sup_{x\in \R^d}|u_0(x)|$.

Then
\begin{eqnarray*}
||u^{(n+1)}||_{1,\gamma}&=&||(P_{t}u_0)(x)+(\mathcal{B}u^{(n)})(t,x)||_{1,\gamma}\\
&\leq& \sup_{x\in \R^d}|u_0(x)|+\frac{1}{4}||u^{(n)}||_{1,\gamma} \\
&\leq& \theta+\frac{1}{4}(\theta+||u^{(n)}||_{1,\gamma})\\
&\leq& \sum_{j=0}^{n} \frac{\theta}{4^j} +\frac{1}{4^{n+1}}\sup_{x\in \R^d}|u^{(0)}|\\
&=&\frac{\theta}{1-\frac{1}{4}}+\frac{1}{4^{n+1}}\theta\\
&\leq& \frac{4}{3}\theta+\theta=\frac{7}{3}\theta=L_1.\\
\end{eqnarray*}

so we have $||u^{(n+1)}||_{1,\gamma} \in\cup_{\gamma>0} \mathcal{L}^{\gamma,1}.$
i.e.
$$\sup_{t>0}\sup_{x\in \R^d} e^{-\gamma t}\E|u^{(n+1)}(t,x)|\leq L_4.$$
Hence $\sup_{x\in \R^d} \E|u^{(n+1)}(t,x)|\leq L_4 e^{\gamma t}, t>0.$

Next, we use the Banach fixed point theorem to show the existence of the solution.
By the Corollary \ref{stochyoungnoncompen2}, we have
\begin{eqnarray*}
 ||u^{(n+1)}-u^{(n)}||_{1,\gamma}&=&||(P_{t}u_0)(x)+(\mathcal{B}u^{(n)})(t,x)-(P_{t}u_0)(x)-(\mathcal{B}u^{(n-1)})(t,x)||_{1,\gamma}\\
 &\leq&||(\mathcal{B}u^{(n)})(t,x)-(\mathcal{B}u^{(n-1)})(t,x)||^{1,\gamma}\\
 &\leq& C^{'}||u^{(n)}-u^{(n-1)}||_{1,\gamma}\\
 &\leq&\frac{1}{4}||u^{(n)}-u^{(n-1)}||_{1,\gamma}.\\
\end{eqnarray*}
Following the last part of the proof of Theorem \ref{Existcompen}, we obtain the existence and uniqueness of the solution to equation \eqref{equnoncompen}.
\end{proof}

\begin{proof}[Proof of Theorem \ref{intermittency-noncomp}] We use  Walsh isometry \eqref{isometrynon}
\begin{eqnarray*}
&\E| u(t,x)|=(P_t u_0)(x)+\int_0^t\int_{\R^d}\int_{\R^d} G(t-s,y-x)\E|\sigma(u(s,y),h)|\mu(\mathrm{d}h)\mathrm{d}y\mathrm{d}s
\end{eqnarray*}
Let $ \eta:=\inf_{x \in \R^d} u_0(x)$. By Condition \ref{condofnoncompen1},  we have
\begin{eqnarray*}
\E |u(t,x)|&=&(P_t u_0)(x)+\int_0^t\int_{\R^d}\int_{\R^d} G(t-s,y-x)\E|\sigma(u(s,y),h)|\mu(\mathrm{d}h)\mathrm{d}y\mathrm{d}s\\
&\geq& \eta+\kappa L \int_0^t\int_{\R^d} G(t-s,y-x)\inf_{y \in \R^d}\E |u(s,y)|\mathrm{d}y\mathrm{d}s\\
&\geq& \eta+\kappa L \int_0^t\inf_{y \in \R^d}\E |u(s,y)| \bigg[\int_{\R^d}G(t-s,y-x) \mathrm{d}y\bigg]\mathrm{d}s\\
&\geq& \eta+\kappa C^* L \int_0^t\inf_{y \in \R^d}\E |u(s,y)|\mathrm{d}s\\
\end{eqnarray*}
Let $f(t):=\inf_{x \in \R^d} \E |u(t,x)|$, the above yields
\begin{eqnarray*}
f(t) \geq \eta+\kappa C^* L \int_0^t {f(s)}{} \mathrm{d}s\\
\end{eqnarray*}
So we get the intended result by Proposition \ref{expestimate}.
\end{proof}

\begin{proof}[Proof of Theorem \ref{no-rf-sol-noncom}]
\begin{equation}
\begin{split}
&\E| u(t,x)|=(P_t u_0)(x)+\int_0^t\int_{\R^d}\int_{\R^d} G(t-s,y-x)\E|\sigma(u(s,y),h)|\mu(\mathrm{d}h)\mathrm{d}y\mathrm{d}s
\end{split}
\end{equation}
Let $ \eta:=\inf_{x \in \R^d} u_0(x)$ and $\eta>0$, we have $(P_t u_0)(x)>\eta$. By Condition \ref{condoflipnoncompen2} and Jensen's inequality,  we have
\begin{eqnarray*}
\E|u(t,x)|&=&(P_t u_0)(x)+\int_0^t\int_{\R^d}\int_{\R^d} G(t-s,y-x)\E|\sigma(u(s,y),h)|\mu(\mathrm{d}h)\mathrm{d}y\mathrm{d}s\\
&\geq& \eta+\kappa L \int_0^t\int_{\R^d} G(t-s,y-x)\inf_{y \in \R^d}\E |u(s,y)|^\lambda \mathrm{d}y\mathrm{d}s\\
&\geq& \eta+\kappa L \int_0^t\inf_{y \in \R^d}\E |u(s,y)|^\lambda \bigg[\int_{\R^d}G(t-s,y-x) \mathrm{d}y\bigg]\mathrm{d}s\\
&\geq& \eta+\kappa C^* L \int_0^t\inf_{y \in \R^d}\E |u(s,y)|^\lambda  \mathrm{d}s.
\end{eqnarray*}

Let $f(t):=\inf_{x \in \R^d} \E |u(t,x)|$,  then we have
$$f(t)\geq \eta+\kappa C^* L \int_0^t  {f(s)^\lambda}{} \mathrm{d}s.$$
Since $\eta$, $\kappa C^*L , \beta d/\alpha>0$, and $\lambda>1$, then  there is no random field solution to Equation \eqref{equnoncompen}.

\end{proof}



\end{document}